\documentclass[11pt,reqno]{amsart}
\usepackage{}
\usepackage{esint}
\usepackage{bbm}
\usepackage{bm}
\usepackage{amssymb}
\usepackage{mathrsfs}
\usepackage{amsfonts}
\usepackage{amsfonts,amssymb,amsmath,amsthm}
\usepackage{url}
\usepackage{enumerate}
\usepackage[pdftex,bookmarksnumbered]{hyperref}
\usepackage{graphicx,xcolor}
\usepackage{pgfplots}
\usepackage{float}
\usepackage{caption}
\usepackage{ulem}
\usepackage{amsmath}
\usepackage{mathtools}

\newtheorem{theorem}{Theorem}[section]

\newtheorem{lemma}[theorem]{Lemma}

\newtheorem{prop}[theorem]{Proposition}
\newtheorem{corollary}[theorem]{Corollary}
\theoremstyle{definition}

\newtheorem{remark}[theorem]{Remark}

\numberwithin{equation}{section}

\newtheorem{example}{Example}
\newtheorem{ex}{Example}

\newenvironment{proofof}[1]{%
\renewcommand{\proofname}{{\rmfamily\bfseries Proof of Theorem #1}}% 证明标题加粗
\begin{proof}% 开始证明环境
}{%	
\end{proof}% 结束证明环境
}
\usepackage[left=2.5cm, right=2.5cm, top=2.0cm, bottom=2.0cm]{geometry}

\usepackage{graphicx}%
\usepackage{color}
\usepackage{cite}
\usepackage{hyperref}
\hypersetup{
	colorlinks = true,%
	citecolor = blue,%[rgb]{0.65,0.0,0.0},
	filecolor=red,%
	linkcolor = [rgb]{0.65,0.0,0.0},%
	anchorcolor = red,
	pagecolor = red,
	urlcolor= [rgb]{0.65,0.0,0.0}
	linktocpage=true,
	pdfpagelabels=true,
	bookmarksnumbered=true,
}

\makeatletter
\renewcommand{\abstractname}{Abstract} % 保持标题为"Abstract"，不自动大写
\renewenvironment{abstract}{%
	\begin{center}\bfseries \abstractname\end{center} % 摘要标题居中、加粗
	\small % 摘要正文字号
	\quotation % 摘要正文左右缩进
}{%
	\endquotation
}
\makeatother
\title{On Spherically Symmetric Sprays}
\author{Yajing Gui
}
\address{
School of Mathematics and Statist
Ningbo University\\
315211 Ningbo, China}
\email{gyj\_mathematics@163.com}

\author{Benling Li}
\address{
School of Mathematics and Statist
Ningbo University\\
315211 Ningbo, China}
\email{libenling@nbu.edu.cn}

\thanks{
Benling Li  is partly supported by the NNSFC(12471045),  NMNSF(2024J017) and K.C.
Wong Magna Fund in Ningbo University. }
\begin{document}
\maketitle
\begin{abstract}
This paper studies spherically symmetric sprays, i.e., sprays that are invariant under orthogonal transformations.
We first establish a canonical form for such sprays, showing that their geodesic coefficients can be expressed as \(G^i = |y|\alpha(r,s) y^i + |y|^2\beta(r,s) x^i\), where \(r = |x|^2\) and \(s = \langle x,y\rangle/|y|\).
For projectively flat spherically symmetric sprays -- which are directly related to Hilbert's fourth problem on characterizing metrics whose geodesics are straight lines -- we derive a complete classification of those with isotropic curvature, and in particular, we obtain the explicit form of those with zero curvature.
Furthermore, we characterize sprays of weakly isotropic curvature in this class by a system of partial differential equations.
These results may provide a unified framework for understanding symmetry and curvature in spray geometry and could offer new insights into the metrizability problem in Finsler geometry.
\end{abstract}

\vskip 5mm
\begin{quotation}
\noindent\textbf{Keywords:} spray;  spherically symmetric; projectively flat; isotropic curvature; weakly isotropic curvature.

\noindent{\bf 2000 MR Subject Classification}: 53B40, 53C60.	
\end{quotation}
\section{Introduction}
Spray geometry investigates the properties of sprays on a manifold. Every Finsler metric induces a natural spray, but not all sprays arise from Finsler metrics. A \textit{spray} $G$ on an $n$-dimensional manifold $M$ is a smooth vector field on $TM\setminus\{0\}$. In  local coordinates \((x^i, y^i)\), it takes form
   \begin{equation}\label{A1}
	G=y^i\frac{\partial}{\partial x^i}-2G^i\frac{\partial}{\partial y^i},
   \end{equation}
where  the spray coefficients \(G^i = G^i(x, y)\)  are smooth on \(TM \setminus \{0\}\) and locally homogeneous functions of degree two in $y$, and
are defined by a system of second order differential equations
   \begin{equation*}
	\frac{d^2x^i}{dt^2}+G^i(x,\frac{dx}{dt})=0.
   \end{equation*}

Spherical symmetry (rotational symmetry) has long played a distinguished role in differential geometry.
Classical examples include the Riemannian space forms of constant sectional curvature \(\mu\), with metric
\[
\alpha_\mu = \frac{\sqrt{|y|^2 + \mu\bigl(|x|^2|y|^2 - \langle x,y\rangle^2\bigr)}}{1+\mu|x|^2},
\]
whose induced spray satisfies \(G^i = -\frac{\mu\langle x,y\rangle}{1+\mu|x|^2} y^i\), where \(y\in T_x\Omega \simeq \mathbb{R}^n\) and $\Omega$ is an open subset in $\mathbb{R}^n$. 
Hereafter, $|\cdot|$ and $\langle \cdot, \cdot\rangle$ denote the Euclidean norm and inner product, respectively.
In general relativity, the famous Schwarzschild metric also is spherically symmetric.
In Finsler geometry, spherically symmetric metrics have been systematically studied in recent decades.
A  Finsler metric $F$ on a domain $\Omega\in \mathbb{R}^n$ is called {\it spherically symmetric} if
\[ F(Ux,Uy)= F(x,y), \qquad \forall\, U\in O(n), \]
where $O(n)$ is the $n$-dimensional orthogonal group.
Zhou \cite{11} first established the general form of such metrics and derived the expression for projectively spherically symmetric ones with constant flag curvature.
Huang and Mo \cite{3} obtained the equivalent equations for spherically symmetric Finsler metrics of scalar flag curvature, constructed many non-projectively flat examples, and revealed the connection between symmetry and curvature.
There are many interesting results on  spherically symmetric Finsler metrics. Some of them can be found in
\cite{MR4756608, MR4778999, MR4552590, MR5013825, 5, 6, 7,  MR4329062, MR4943539}.
These results suggest that spherically symmetric sprays not only generalize classical models but also provide a unified framework for constructing Finsler metrics with prescribed curvature properties.

A \textit{spherically symmetric spray} is defined by the invariance
\begin{equation}\label{A3}
	G(Ux,Uy)= G(x,y),\qquad \forall\, U\in O(n).
\end{equation}
In this case the geodesic coefficients depend only on \(|x|,|y|,\langle x,y\rangle\) and are homogeneous of degree two in \(y\).
Our first main result gives a complete local description of such sprays.
By the spherical symmetry, the domain $\Omega\subset\mathbb{R}^n$ on which $G$ is defined can be taken to be either an open ball centered at the origin or the whole space $\mathbb{R}^n$.

\begin{theorem}\label{maintheorem1}
Let $G$ be a spray on a domain $\Omega \subset \mathbb{R}^n$.
Then $G$ is spherically symmetric if and only if there exist smooth functions $\alpha = \alpha(r, s)$ and $\beta = \beta(r,s)$, with
$r = |x|^2$ and $s = \langle x, y \rangle / |y|$, such that
\begin{align}\label{G^ialphabeta}
G^i(x,y)=|y|\alpha(r,s)y^i+|y|^2 \beta(r,s)x^i\quad(y\neq0),\qquad G^i(x,0)=0.
\end{align}
\end{theorem}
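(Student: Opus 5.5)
The condition \eqref{A3} should be read as the invariance of the vector field $G$ under the induced maps $(x,y)\mapsto (Ux,Uy)$ on $T\Omega$. Since these maps are linear in both $x$ and $y$, this is equivalent to the equivariance of the coefficients:
\[
G^i(Ux,Uy)=U^i_j\,G^j(x,y),\qquad \forall\,U\in O(n).
\]
I would first verify this reformulation by pushing forward $y^i\partial_{x^i}-2G^i\partial_{y^i}$ under the tangent map. The map $(x,y)\mapsto(Ux,Uy)$ preserves the first term automatically, so the condition reduces to $G^i$ transforming as a vector. The proof then becomes a statement about $O(n)$-equivariant vector-valued functions of the pair $(x,y)$. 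Sufficiency is then immediate: $r$, $s$, $|y|$ are $O(n)$-invariant, so the right-hand side of \eqref{G^ialphabeta} transforms like $y^i$ and $x^i$. It is also positively homogeneous of degree two in $y$, because $s$ is homogeneous of degree $0$. Finally, it extends by $0$ at $y=0$.

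For necessity, fix $(x,y)$ with $y\ne0$. First suppose $x$ and $y$ are linearly independent. Take $U$ to be the reflection fixing the plane $\mathrm{span}\{x,y\}$ pointwise and reversing a chosen unit vector $e$ orthogonal to that plane. Equivariance gives $G(x,y)=UG(x,y)$, so $\langle G(x,y),e\rangle=0$ for every such $e$. Hence $G(x,y)\in\mathrm{span}\{x,y\}$, and we may write $G=a\,y+b\,x$ with unique coefficients $a,b$. The coefficients are obtained from the Gram system
\[
\langle G,y\rangle=a|y|^2+b\langle x,y\rangle,\qquad \langle G,x\rangle=a\langle x,y\rangle+b|x|^2.
\]
The functions $\langle G,x\rangle$ and $\langle G,y\rangle$ are $O(n)$-invariant. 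By the classical first fundamental theorem for $O(n)$, which one can also check directly by mapping any pair to a normal form in $\mathrm{span}\{e_1,e_2\}$, these functions depend only on $|x|^2$, $\langle x,y\rangle$ and $|y|^2$. Therefore $a$ and $b$ also depend only on these quantities. Homogeneity of $G$ of degree two in $y$ gives $a(\lambda y)=\lambda a(y)$ and $b(\lambda y)=\lambda^2 b(y)$. Setting
\[
\alpha:=a/|y|,\qquad \beta:=b/|y|^2,
\]
we get functions of degree $0$ in $y$, hence functions of $r$ and $s=\langle x,y\rangle/|y|$. We also have $s^2<r$ on the independent set.

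The main obstacle is the degenerate set where $y\parallel x$, that is $s^2=r$, which includes $x=0$. On this set the decomposition into $x$ and $y$ is not unique, and the formulas for $a,b$ blow up because the Gram determinant $|x|^2|y|^2-\langle x,y\rangle^2$ vanishes. Smoothness of $\alpha,\beta$ up to this set is not automatic. I would handle it as follows. At degenerate points, equivariance under the stabilizer $O(n-1)$ of the line forces $G$ to be parallel to $y$ when $n\ge2$, so we may set $\beta=0$ there. To get smooth $\alpha,\beta$ globally, I would replace the Gram-inverse formula by a normal-form construction. Rotate so that $y=|y|e_1$ and $x=s\,e_1+\sqrt{r-s^2}\,e_2$. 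Then define $\tilde\alpha(r,s)=G^1/|y|^2$ and $\tilde\beta=G^2/\bigl(|y|^2\sqrt{r-s^2}\bigr)$, both evaluated at $y=e_1$. Since $G^2$ restricted to $y=e_1$ is a smooth function of $x$ that is odd under reflection in $x^2$, we can write $G^2(x,e_1)=x^2h(x^1,(x^2)^2)$ with $h$ smooth, by a Whitney-type odd-function lemma. This gives smooth $\beta=h(s,r-s^2)$, and $\alpha$ follows similarly as a smooth even function. The sense in which $\alpha,\beta$ are smooth on the closed region $s^2\le r$ is exactly what this lemma delivers. I expect this step to need the most care. The statement $G^i(x,0)=0$ is simply the conventional continuous extension by homogeneity.
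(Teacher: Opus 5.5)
Your argument is correct. It shares the paper's outer structure: the reformulation $G^i(Ux,Uy)=U^i_jG^j(x,y)$, the reflection argument that puts $G(x,y)$ in $\mathrm{span}\{x,y\}$, the direct check of sufficiency, and the use of homogeneity to pass to functions of $(r,s)$. The real difference is how you get smoothness of $\alpha,\beta$.

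The paper asserts that there are smooth coefficients $a,b$ with $G=ay+bx$. It then applies Schwarz's theorem (Corollary~\ref{Cor1}) to them, obtaining smooth $\tilde a,\tilde b$ of $(|x|^2,|y|^2,\langle x,y\rangle)$ on $\mathbb{R}^3$. This is quick, but it assumes exactly what you flag as the obstacle. The coefficients $a,b$ are not determined where $x\parallel y$. Moreover, $a$ is $1$-homogeneous in $y$, so it is generally not smooth at $y=0$. Already for $G^i=|y|y^i$ one has $a=|y|$, which is not a smooth function of $|y|^2$. So the hypotheses of the corollary are never verified.

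Your route normalizes to $y=e_1$ before any smoothness argument. You then only need the residual symmetry $x^2\mapsto -x^2$ of the genuinely smooth map $x\mapsto G(x,e_1)$. The elementary Hadamard--Whitney odd/even lemma with parameters gives $\beta=h(s,r-s^2)$, smooth across $s^2=r$. This costs some bookkeeping. In return it actually proves the smoothness the paper takes for granted, using only the fact that $O(n)$-orbits of pairs are classified by their Gram matrix, with no smooth invariant theory.

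Two small things to tidy. First, in your normal form $G^1(x,e_1)=\alpha+s\beta$, so $\alpha=\tilde\alpha-s\tilde\beta$, not $\tilde\alpha$. Second, drop the suggestion to set $\beta=0$ on the collinear set: only $\alpha+s\beta$ is determined there, and the smooth choice has $\beta=h(s,0)$.

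Once the normal form is in place, the Gram-system step becomes redundant. Equivariance under a rotation taking $(x,y/|y|)$ to $(se_1+\sqrt{r-s^2}\,e_2,\,e_1)$, together with the reflection argument, already gives the representation at every point with $y\neq 0$.
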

Thus spherically symmetric sprays are governed by two spherically symmetric functions.

A natural and important subclass consists of projectively flat sprays, which are intimately related to Hilbert's fourth problem.
A spray on \(\Omega\subset\mathbb{R}^n\) is \textit{projectively flat} if its geodesics are straight lines, i.e.
\[
G^i = P(x,y)y^i,
\]
where \(P(x,y)\) is homogeneous of degree one in \(y\) and  called {\it projective factor} of $G$.
Buc\u{a}taru and Muzsnay \cite{1} characterized when such a spray is metrizable by a Finsler metric.
Yang \cite{9} studied properties of projectively flat sprays and observed that many isotropic sprays are not Finsler metrizable.
Li and Shen \cite{6} investigated sprays of isotropic curvature and showed that zero-curvature sprays can be induced by a family of Finsler metrics.
For a projectively flat spherically symmetric spray, Theorem~\ref{maintheorem1} implies that the projective factor has the form
\[
P(x,y)=|y|\,p(r,s).
\]

In 1926, L. Berwald extended the notion of Riemann curvature tensor to sprays \cite{2}.
The \textit{Riemann curvature tensor} $R^i_{\ k}$ is a linear transformation on the tangent space. In local coordinates, it can be expressed as
   \begin{equation}\label{B1}
	R^i_{\ k}=2  G^i_{x^k}- G^i_{x^j y^k} y^j+2 G^j G^i_{y^j y^k}- G^i_{y^j} G^j_{y^k}.
   \end{equation}
A spray $G$ is said to be of \textit{scalar curvature} if its Riemann curvature tensor $R^i_{\ k}$ can be expressed as
\begin{equation}\label{A5}
	R^i_{\ k}=R\delta^i_k-\tau_ky^i,
\end{equation}
where $R=R(x,y)$, $\tau_k=\tau_k(x,y)$ with $R=\tau_ky^k$. In particular, if
\begin{equation}\label{A6}
\tau_k=\frac{1}{2} R_{y^k},
\end{equation}
 then the spray $G$ is of \textit{isotropic curvature}. Furthermore, $G$ is called of {\it zero curvature} if $R^i_{\ k} =0$.

Our second main result classifies those of isotropic curvature (and in particular zero curvature).
\begin{theorem}\label{maintheorem2}
Let \(G\) be a projectively flat spherically symmetric spray with \(G^i=|y|p(r,s)y^i\), where \(r=|x|^2\), \(s=\langle x,y\rangle/|y|\). Then \(G\) has isotropic curvature if and only if
\begin{equation}
p(r,s)=\left[\int\frac{u(r-s^2)}{s^2}\,ds+v(r)\right]s, \label{A7}
\end{equation}
where \(u\) and \(v\) are arbitrary smooth functions (with the integral chosen so that \(p\) is well-defined).
In particular, \(G\) has zero curvature if and only if either
\begin{enumerate}[(i)]
\item \(p\equiv0\) (defined on \(\mathbb{R}^n\times\mathbb{R}^n\)); or
\item \(p(r,s)=\dfrac{s\pm\sqrt{s^2-r+c}}{c-r}\), with \(c>0\) a constant,
\end{enumerate}
where the spray is defined on \(B_{\sqrt{c}}^n(0)\times\mathbb{R}^n\). In the latter case the expression is equivalent to the previous one with \(u(r-s^2)=\mp\frac{1}{\sqrt{s^2-r+c}}\) and an appropriate \(v(r)\) (see Remark \ref{remark}).
\end{theorem}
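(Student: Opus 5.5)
The plan is to reduce everything to PDEs for $p(r,s)$ by using the standard curvature formula for projectively flat sprays. For $G^i=Py^i$, substituting into \eqref{B1} gives
\[
R^i_{\ k}=\Xi\,\delta^i_k+\tau_k y^i,\qquad \Xi=P^2-P_{x^m}y^m,\qquad \tau_k=3\bigl(P_{x^k}-PP_{y^k}\bigr)+\Xi_{y^k}.
\]
This is of the form \eqref{A5} with $R=\Xi$. By \eqref{A6}, isotropic curvature means $\tau_k=-\tfrac12\Xi_{y^k}$. Expanding $\Xi_{y^k}=2PP_{y^k}-P_{x^k}-P_{x^my^k}y^m$, the $PP_{y^k}$ terms cancel, and the condition becomes
\[
\bigl(P_{x^m}y^m\bigr)_{y^k}=2P_{x^k}.
\]

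\textbf{Reduction to a PDE for $p$.} Set $P=|y|p(r,s)$. Then
\[
P_{x^k}=2|y|p_r x^k+p_s y^k,\qquad Q:=P_{x^m}y^m=|y|^2\Phi,\quad \Phi=2sp_r+p_s.
\]
Using $\partial s/\partial y^k=x^k/|y|-sy^k/|y|^2$, we get
\[
Q_{y^k}=(2\Phi-s\Phi_s)y^k+|y|\Phi_s x^k.
\]
For $n\ge2$ the vectors $x$ and $y$ are generically independent, so we may compare coefficients. The $y^k$-coefficients agree identically. The $x^k$-coefficient gives $\Phi_s=4p_r$, that is,
\[
p_{ss}+2sp_{rs}-2p_r=0 .
\]

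\textbf{Solving the PDE.} Write $p=sq$ and $w=q_s$. The equation becomes $2w+sw_s+2s^2w_r=0$. For $W=s^2w$ this is the transport equation $W_s+2sW_r=0$. Its characteristics are the curves $r-s^2=\text{const}$, so $W=u(r-s^2)$. Hence $q_s=u(r-s^2)/s^2$, and integrating in $s$ with an $r$-dependent constant $v(r)$ gives \eqref{A7}. Each step is reversible, which yields the equivalence.

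\textbf{Zero curvature.} Zero curvature means isotropic curvature together with $\Xi=0$. Equivalently (since then $\tau_k=0$), it means $P_{x^k}=PP_{y^k}$. Substituting $P=|y|p$ and comparing the coefficients of $x^k$ and $y^k$ gives two equations:
\[
2p_r=pp_s,\qquad p_s(1+sp)=p^2 .
\]
If $p\equiv0$ we are in case (i). Otherwise, the second equation is an ODE in $s$ with parameter $r$. Inverting it, $ds/dp=(1+sp)/p^2$ is linear in $s$, and integrating gives $s/p=-1/(2p^2)+C(r)$. Thus $2C(r)p^2-2sp-1=0$, so
\[
p=\frac{s\pm\sqrt{s^2+2C}}{2C}.
\]
Writing $2C(r)=\phi(r)$ and inserting this into $2p_r=pp_s$ should force $\phi=1/(c-r)$-type behaviour, namely $2C=c-r$ after normalization, with $c$ a constant. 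This yields (ii). Positivity of the discriminant and smoothness at $s$ near $0$ then force $c>0$ and $r<c$, i.e.\ the domain $B^n_{\sqrt c}(0)$. A direct computation of $q_s$ identifies $u=\mp(s^2-r+c)^{-1/2}$, consistent with Remark~\ref{remark}.

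The main obstacle I expect is this last step: extracting the ODE for $C(r)$ from $2p_r=pp_s$ cleanly. I would also need to handle the degenerate branches, namely $C\equiv0$, which corresponds to $p=-1/(2s)$ and is not smooth, and the sign choices, and check which of them give sprays that are smooth on all of $TM\setminus\{0\}$.
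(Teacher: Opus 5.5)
Your reduction is correct, and it takes a different and more economical route than the paper. The paper computes the full tensor \eqref{B3} by brute force and compares it with its trace part in \eqref{C4}. For zero curvature it works with the pair \eqref{C8}--\eqref{C9}, which involves $p_{rs}$ and $p_{ss}$, and it has to differentiate \eqref{C8} and eliminate before it reaches $pp_s=2p_r$ and $p^2=spp_s+p_s$. You stay at the level of $P$: isotropic curvature is equivalent to $P_{x^my^k}y^m=P_{x^k}$, and zero curvature to $P_{x^k}=PP_{y^k}$. This gives the same PDE $p_{ss}+2sp_{rs}-2p_r=0$ and the same first-order pair directly. Your solution by characteristics is the paper's. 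One correction: the $y^k$-coefficients do not agree identically. The $y^k$-coefficient of $Q_{y^k}-2P_{x^k}$ is $-s(\Phi_s-4p_r)$, and in fact $Q_{y^k}-2P_{x^k}=(\Phi_s-4p_r)(|y|x^k-sy^k)$. This is the same factorization as \eqref{C4}, and it is why the single PDE is equivalent to isotropic curvature.

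The genuine gap is the step you flag yourself. The equation for $C(r)$ is predicted, never derived, and the prediction ``$\phi=1/(c-r)$-type behaviour, namely $2C=c-r$'' contradicts itself, since $\phi=2C$. Without this step, case (ii) is proved in neither direction. It closes by implicit differentiation of $2C(r)p^2-2sp-1=0$:
\[
(2Cp-s)\,p_s=p,\qquad (2Cp-s)\,p_r=-C'(r)\,p^2 .
\]
Here $(2Cp-s)p=2Cp^2-sp=1+sp\neq0$, because $p_s(1+sp)=p^2\neq0$. So you may divide and obtain
\[
2p_r-pp_s=-\frac{(2C'+1)\,p^2}{2Cp-s}.
\]
Hence $2p_r=pp_s$ holds if and only if $C'=-\tfrac12$, i.e.\ $2C=c-r$. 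This gives \eqref{C11} together with its sufficiency.

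For the domain, do not only exclude $C\equiv0$. The value $s=0$ occurs over every $x$ (take $y\perp x$), and there the quadratic reads $2C(r)\,p(r,0)^2=1$. So $C(r)>0$ for every admissible $r$, and since $r=0$ is admissible this forces $c>0$ and $|x|^2<c$. Dividing by $p$ also requires $p$ to be nowhere zero, not merely $p\not\equiv0$. This follows from uniqueness for $p_s=p^2/(1+sp)$ and $p_r=p^3/\bigl(2(1+sp)\bigr)$, which both admit the solution $p\equiv0$; the paper glosses this point as well.
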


Formula \eqref{A7} allows one to construct many sprays of isotropic curvature by suitable choices of \(u\) and \(v\).
Examples of both zero and non-zero isotropic curvature are given in Corollary~\ref{example1} and Example~\ref{example2} in Section~\ref{section4}.

Beyond isotropic curvature, a more general notion -- \textit{weakly isotropic curvature} -- has recently been introduced \cite{8}.
For spherically symmetric sprays, the conditions simplify considerably, and we derive an equivalent system of PDEs for the function \(p(r,s)\).
However, the detailed analysis of weakly isotropic curvature is technically more involved; we postpone it to Section~\ref{section5}, where we also present some explicit special solutions.

The paper is organized as follows. Section~\ref{section3} proves Theorem~\ref{maintheorem1} (the canonical form of spherically symmetric sprays).
Section~\ref{section4} proves Theorem~\ref{maintheorem2} (classification of isotropic and zero curvature), and also contains the necessary curvature computations.
Section~\ref{section5} is devoted to weakly isotropic curvature: we derive the governing PDEs, prove Theorem~\ref{maintheorem3}, and give examples.

Throughout the paper, \(G\) denotes a spray on a domain $\Omega \in \mathbb{R}^n$, while \(K\) (and sometimes \(H\)) denotes compact Lie groups.
Subscripts indicate partial derivatives, e.g. \(p_r = \partial p/\partial r\), \(p_{rs} = \partial^2 p/\partial r\partial s\), etc.

\section{Spherically symmetric spray}\label{section3}
In this section we establish the canonical form of spherically symmetric sprays.
Using Schwarz's theorem, the geodesic coefficients depend only on \(|x|^2,|y|^2,\langle x,y\rangle\).
Together with homogeneity, we derive \(G^i = |y|\alpha(r,s)x^i + |y|\beta(r,s)y^i\) (\(r=|x|^2\), \(s=\langle x,y\rangle/|y|\)), proving Theorem~\ref{maintheorem1}.
As a consequence, we also obtain a criterion for such a spray to be induced by a Finsler metric when it has non-zero isotropic curvature.

The following theorem was proved by G. W. Schwarz in 1975.
\begin{theorem}\cite[Theorem 1]{Schwarz} \label{Schwarz}
Let \(K\) be a compact Lie group acting orthogonally on  a finite-dimensional real vector space \(V\).
Let \(\rho_1, \dots, \rho_\ell\)\footnote{The number $\ell$ of generators is not directly related to
$\dim V = n$; it can be greater than, equal to, or less than
$n$ depending on the group action.}
 be generators of the algebra \(P[V]^K\) of \(K\)-invariant polynomials, and let $\rho=(\rho_1, \dots, \rho_\ell) : V \rightarrow \mathbb{R}^\ell$.
Then the map
\[
\rho^*: C^\infty(\mathbb{R}^\ell) \longrightarrow C^\infty(V)^K,\quad
f \longmapsto f(\rho_1, \dots, \rho_\ell)
\]
is surjective. Consequently, every smooth \(K\)-invariant function on \(V\) can be written as a smooth function
of these basic polynomial invariants.
\end{theorem}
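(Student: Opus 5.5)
The plan follows Schwarz's original strategy: reduce the statement to a local one, handle the formal (Taylor series) level by averaging over $K$, and then pass from formal to smooth factorization by a composite-function theorem of Glaeser type, using induction on $\dim V$ via the slice theorem.

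\textbf{Setup.} Let $f\in C^\infty(V)^K$ and write $\rho=(\rho_1,\dots,\rho_\ell)$. Since the action is orthogonal, $|v|^2$ is invariant and is therefore a polynomial in the $\rho_j$. Consequently $\rho$ is proper. Moreover, $\rho$ separates $K$-orbits: for compact $K$, invariant polynomials separate orbits. So $\rho$ induces a homeomorphism of $V/K$ onto the closed semialgebraic set $X=\rho(V)\subset\mathbb{R}^\ell$. This gives a continuous $h$ on $X$ with $f=h\circ\rho$. The task is to show that $h$ extends to a $C^\infty$ function on $\mathbb{R}^\ell$.

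\textbf{Formal step.} At the origin, $K$ acts linearly, so each homogeneous Taylor component of $f$ is a $K$-invariant polynomial; averaging over $K$ with Haar measure keeps Taylor series invariant. By the choice of generators, each component is a polynomial in $\rho$. Borel's lemma then yields a formal series $\hat h$ at $0\in\mathbb{R}^\ell$ with $\hat f_0=\hat h\circ\hat\rho$.

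\textbf{Local steps away from the origin.} For $v\neq0$, apply the slice theorem: a $K$-invariant neighbourhood of $Kv$ is $K\times_{K_v}S$, where $S$ is the slice representation of the isotropy group $K_v$, after splitting off the trivial factor, which has dimension smaller than $\dim V$. By induction on $\dim V$, invariant germs near $v$ are smooth functions of the generators of $P[S]^{K_v}$. Next one checks, again with Hilbert's theorem and averaging, that these generators are locally smooth functions of $\rho$ near $\rho(v)$. This gives, at every point $\rho(v)\in X$, a formal (in fact local smooth) extension of $h$ compatible with $f$. Together with the formal step, $h$ is then a \emph{formally composite} function at every point of $X$.

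\textbf{Main obstacle.} The crux is passing from "formally composite with $\rho$ at every point" to "$h$ extends smoothly to $\mathbb{R}^\ell$". Here I would invoke the composite-function theorem for proper polynomial (more generally, real-analytic) maps: Glaeser's theorem, as refined by Tougeron and Bierstone--Milman. For such $\rho$, the ring $\rho^*C^\infty(\mathbb{R}^\ell)$ is closed in $C^\infty(V)$, and by Whitney's spectral theorem it consists precisely of functions that are formally composite at each point. Checking the uniformity hypotheses needed there (proper polynomial map, Łojasiewicz-type inequalities on $X$) is the delicate part. If that route stalls, I would fall back on Mather's proof via Malgrange's preparation theorem: first show that $C^\infty(V)^K$ is a finitely generated module over $\rho^*C^\infty(\mathbb{R}^\ell)$, then use the formal statement at degree zero to conclude surjectivity.

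For the application in this paper, $K=O(n)$ acting diagonally on $V=\mathbb{R}^n\times\mathbb{R}^n$ with generators $|x|^2$, $|y|^2$ and $\langle x,y\rangle$. In this case one could alternatively argue directly by reducing to the $2$-dimensional slice $\operatorname{span}\{x,y\}$ and using Whitney's even-function theorem.
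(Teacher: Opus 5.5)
The paper does not prove this statement at all. It quotes Schwarz's theorem, with a pointer to Michor, and only uses the special case $K=O(n)$ acting diagonally on $\mathbb{R}^n\times\mathbb{R}^n$ in Corollary~\ref{Cor1}. So there is no internal argument to compare with, and your outline has to stand on its own. Its architecture is sound: properness and orbit separation of $\rho$, the formal step at $0$, slices away from $0$, then a formal-to-smooth passage. Two points need repair.

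\textbf{The comparison at $v\neq 0$.} This does not follow from ``Hilbert's theorem and averaging'', which say nothing about how the $K_v$-invariants of the slice $S$ relate to $\rho$. What you need is a Luna-type statement that $\psi:=\rho(v+\cdot)-\rho(v)$ generates the $K_v$-invariants on $S$. At the formal level this can be proved:
\begin{enumerate}[(1)]
\item extend a $K_v$-invariant polynomial on $S$ through the tube $K\times_{K_v}S$ (with a cutoff) to a $K$-invariant smooth function on $V$;
\item $K$-invariant polynomials are dense in $C^\infty(V)^K$ (Weierstrass approximation, then Haar averaging), and a dense subspace of a finite-dimensional jet space is the whole space, so every $K_v$-invariant jet on $S$ is the jet at $0$ of a polynomial in $\psi$;
\item a complete-Nakayama argument upgrades this to surjectivity of formal series at $\rho(v)$ onto $K_v$-invariant formal series on $S$.
\end{enumerate}
The \emph{smooth} local statement you also claim needs more, e.g.\ the inductive hypothesis plus Malgrange's preparation theorem, or Luna's slice theorem. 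But your main route only needs formal compositeness, so there the induction is superfluous.

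\textbf{The crux is entirely outsourced.} The theorem that closes it is Bierstone--Milman's (Ann.\ of Math.\ 1982): if $\varphi$ is proper, real-analytic and regular in Gabrielov's sense (polynomial, indeed Nash, maps are), then every $C^\infty$ function formally composite with $\varphi$ at each point is composite. With this stated precisely, nothing ``delicate'' is left for you to check. The price is that it is much deeper, and later, than Schwarz's theorem.

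The classical argument avoids it. Local smooth composites near $\rho(v)$ for $v\notin V^K$, plus a partition of unity on $\mathbb{R}^\ell$, reduce everything to a neighbourhood of $\rho(V^K)$. There, after splitting off $V^K$ (working with parameters) and subtracting a Borel realization of the formal composite, one treats invariants flat along $V^K$. The tools are the weighted homogeneity $\rho_j(tw)=t^{d_j}\rho_j(w)$, the inductive result on an annulus with uniform estimates, and a dyadic partition of unity.

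\textbf{Your fallback cannot be run at the origin as described.} As soon as some $K$-orbit has positive dimension, the ideal generated by $\rho_1,\dots,\rho_\ell$ in germs at $0$ has infinite codimension; for example $(x^2+y^2)$ for $SO(2)$ on $\mathbb{R}^2$. So Malgrange's finiteness criterion fails, and that argument only covers finite groups.

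\textbf{The closing remark.} Reducing the paper's case to $\operatorname{span}\{x,y\}$ leaves $O(2)$ acting on $\mathbb{R}^2\oplus\mathbb{R}^2$ with invariants $|x|^2,|y|^2,\langle x,y\rangle$. That case does not follow from Whitney's even-function theorem alone.
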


\begin{remark}[Application to our setting]
In the special case of the diagonal action of \(O(n)\) on \(\mathbb{R}^n \times \mathbb{R}^n\),
the algebra of invariant polynomials is generated by
\[
\rho_1 = |x|^2,\quad \rho_2 = |y|^2,\quad \rho_3 = \langle x, y \rangle.
\]
Schwarz's theorem therefore guarantees that any smooth \(O(n)\)-invariant function
\(f(x,y)\) can be expressed as
\[
f(x,y) = \tilde{f}\bigl(|x|^2, |y|^2, \langle x, y \rangle\bigr)
\]
for some smooth function \(\tilde{f}\) on \(\mathbb{R}^3\).
For a comprehensive treatment of invariant smooth functions,
see Michor~\cite[Chapter 4]{Michor}.

By this fact, the following corollary is a direct consequence of Theorem \ref{Schwarz}.
Alternately, a direct proof also can be obtained by a similar method in \cite{11}, where L. Zhou studied spherically symmetric Finsler metrics.
\end{remark}

\begin{corollary}\label{Cor1}
Let \(f: \mathbb{R}^n \times \mathbb{R}^n \to \mathbb{R}\) be a smooth function that is invariant under the diagonal action of the orthogonal group \(O(n)\):
\[
f(Ux, Uy) = f(x,y) \qquad \forall\, U \in O(n), \; \forall\, (x,y) \in \mathbb{R}^n \times \mathbb{R}^n.
\]
Then there exists a smooth function \(\tilde{f}: \mathbb{R}^3 \to \mathbb{R}\) such that
\[
f(x,y) = \tilde{f}\bigl( |x|^2, |y|^2, \langle x, y \rangle \bigr) \quad \forall (x,y).
\]
\end{corollary}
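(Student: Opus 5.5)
The plan is to apply Theorem~\ref{Schwarz} with $K=O(n)$ acting diagonally on $V=\mathbb{R}^n\times\mathbb{R}^n$ by $U\cdot(x,y)=(Ux,Uy)$. This action is orthogonal for the standard inner product on $\mathbb{R}^{2n}$, since $|Ux|^2+|Uy|^2=|x|^2+|y|^2$, and $O(n)$ is a compact Lie group. So the hypotheses of Theorem~\ref{Schwarz} hold, and it remains to identify generators of the invariant polynomial algebra $P[V]^{O(n)}$.

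For the generators, I would invoke the First Fundamental Theorem of invariant theory for $O(n)$: polynomial invariants of a tuple of vectors under the diagonal action are polynomials in their pairwise inner products. For two vectors these are $\rho_1=|x|^2$, $\rho_2=|y|^2$, $\rho_3=\langle x,y\rangle$, as recorded in the preceding Remark. If one wants to avoid quoting the full theorem, here is a short argument for two vectors. Let $q$ be an invariant polynomial. Using a rotation, take $x=(a,0,\dots,0)$ and $y=(b,c,0,\dots,0)$ with $c\ge 0$; when $n\ge 3$, the sign of $a$ can also be flipped. Invariance under the reflections $e_1\mapsto -e_1$ and $e_2\mapsto -e_2$, applied at the appropriate stage, shows that the restricted polynomial $q(a,b,c)$ is invariant under $(a,b,c)\mapsto(-a,-b,c)$ and under $c\mapsto -c$. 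Hence $q$ is a polynomial in $a^2=\rho_1$, $ab=\rho_3$, $b^2+c^2=\rho_2$ and $b^2$, and $b^2=\rho_3^2/\rho_1$ introduces a denominator. Handling that denominator is exactly the classical content of the First Fundamental Theorem, so I would simply cite it (or Weyl) rather than redo it; I regard this identification as the only nontrivial input.

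With the generators in hand, set $\rho=(\rho_1,\rho_2,\rho_3):\mathbb{R}^{2n}\to\mathbb{R}^3$. Theorem~\ref{Schwarz} says $\rho^*:C^\infty(\mathbb{R}^3)\to C^\infty(V)^{O(n)}$ is surjective. The given $f$ is smooth and invariant by hypothesis, so there exists $\tilde f\in C^\infty(\mathbb{R}^3)$ with $f=\tilde f\circ\rho$, that is, $f(x,y)=\tilde f(|x|^2,|y|^2,\langle x,y\rangle)$ for all $(x,y)$.

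The main obstacle is not this deduction but its scope, namely smoothness of $\tilde f$ as a function on all of $\mathbb{R}^3$, including boundary points of the orbit image such as $x\parallel y$ or $y=0$. Schwarz's theorem supplies exactly this, so nothing beyond verifying its hypotheses is needed. I would also remark that Theorem~\ref{Schwarz} is stated for $f$ defined on all of $V$, whereas later the paper needs $f$ on $\Omega\times\mathbb{R}^n$ with $\Omega$ a ball. There the same conclusion follows from Michor's local version for invariant open sets, or by noting that a ball is $O(n)$-invariant and using an invariant cutoff.
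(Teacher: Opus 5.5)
Your proposal is correct and follows the paper's route. The paper also derives the corollary directly from Schwarz's theorem, after noting in the preceding Remark that $|x|^2$, $|y|^2$, $\langle x,y\rangle$ generate the invariant polynomial algebra of the diagonal $O(n)$-action (it asserts this without proof, whereas you cite the First Fundamental Theorem), and your added checks that the action is orthogonal and that the argument localizes to $O(n)$-invariant open sets are sound points the paper leaves implicit.
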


In the following, we prove two lemmas which compose the necessity and sufficiency of Theorem~\ref{maintheorem1}.
\begin{lemma}
Let $G$ be a spray on a domain $\Omega \subset \mathbb{R}^n$.
If $G$ is spherically symmetric, then there exist two functions $\alpha = \alpha(r, s)$ and $beta = beta(r,s)$, where
$r = |x|^2$ and $s = \langle x, y \rangle / |y|$, such that
\begin{align}\label{G^ialphabetanecessity}
G^i = |y| \alpha(r,s) y^i + |y|^2 \beta(r,s) x^i, \text{ for } y\neq 0.
\end{align}
\end{lemma}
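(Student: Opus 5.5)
We have to interpret "spherically symmetric" for a spray, i.e. for the vector field $G$. Invariance of $G$ under the map $(x,y)\mapsto(Ux,Uy)$ amounts to the equivariance condition
\[
G^i(Ux,Uy)=U^i_{\ j}\,G^j(x,y),\qquad U\in O(n),
\]
and the plan is to derive the canonical form from this. We fix $y\neq0$ and first show that $G(x,y)$ lies in $\mathrm{span}\{x,y\}$. Choose $U$ to be the reflection (or any orthogonal map) fixing $x$ and $y$ pointwise and acting as $-\mathrm{Id}$ on a vector $w$ orthogonal to both. Equivariance gives $U G(x,y)=G(x,y)$. Pairing with $w$ then yields $\langle G(x,y),w\rangle=0$, so $G(x,y)\in\mathrm{span}\{x,y\}$. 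When $n\ge 3$, or when $n=2$ and $x,y$ are parallel, such $w$ exist. In the remaining case $n=2$ with $x,y$ independent, the span is already all of $\mathbb{R}^2$.

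Next we extract invariant scalar functions. Consider
\[
a(x,y)=\langle G(x,y),y\rangle,\qquad b(x,y)=\langle G(x,y),x\rangle.
\]
By equivariance, $a(Ux,Uy)=\langle UG,Uy\rangle=a(x,y)$, and likewise for $b$, so both are $O(n)$-invariant smooth functions on $\Omega\times(\mathbb{R}^n\setminus\{0\})$. By Corollary \ref{Cor1} (applied locally, or simply by noting that invariance forces dependence only on $|x|^2,|y|^2,\langle x,y\rangle$, since these parametrize the orbits), they are functions of $r$, $|y|^2$ and $\langle x,y\rangle$. Homogeneity of degree two of $G$ makes $a$ homogeneous of degree $3$ and $b$ of degree $2$. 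We can therefore write
\[
a=|y|^3A(r,s),\qquad b=|y|^2B(r,s),\qquad s=\langle x,y\rangle/|y|.
\]

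Where $x,y$ are independent, we write $G=\lambda y+\mu x$ and solve the Gram system
\[
\lambda|y|^2+\mu\langle x,y\rangle=a,\qquad \lambda\langle x,y\rangle+\mu r=b,
\]
whose determinant is $|y|^2(r-s^2)$. Cramer's rule gives
\[
\lambda=|y|\,\frac{rA-sB}{r-s^2}=:|y|\alpha(r,s),\qquad \mu=\frac{B-sA}{r-s^2}=:\beta(r,s),
\]
which has exactly the form \eqref{G^ialphabetanecessity}. The lemma does not require smoothness of $\alpha$ and $\beta$, so this already suffices on the set where $r\neq s^2$.

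\textbf{Main obstacle: the degenerate locus $r=s^2$, where $x\parallel y$.} There $G$ is a multiple of $y$ (from the reflection step, since $x\in\mathrm{span}\{y\}$), so we can take $\beta=0$ and $\alpha=A$, making the representation hold pointwise. If one wants $\alpha,\beta$ smooth, we would instead argue that for fixed $r$ the functions $rA-sB$ and $B-sA$ vanish on $s^2=r$ and divide smoothly, using Hadamard's lemma in the variable $s$. Since the lemma only asserts existence of functions, we will present the pointwise definition and remark on smoothness separately.
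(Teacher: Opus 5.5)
Your argument is correct for the lemma as stated, which asks only for functions $\alpha,\beta$ and not smooth ones. Apart from one algebra slip noted below, it departs from the paper at the key step.

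Both arguments share the equivariance identity and the reflection argument giving $\mathbb{G}(x,y)\in\operatorname{span}\{x,y\}$. After that, the paper directly asserts smooth coefficients $a,b$ with $G^i=a\,y^i+b\,x^i$. It deduces their $O(n)$-invariance by comparing coefficients, then invokes Schwarz's theorem and homogeneity.

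You instead work with the scalars $\langle \mathbb{G},y\rangle$ and $\langle \mathbb{G},x\rangle$, which are manifestly smooth and invariant. You reduce them to $|y|^3A(r,s)$ and $|y|^2B(r,s)$ via the elementary fact that the Gram data determine the $O(n)$-orbit of a pair. You then recover the coefficients by inverting the Gram matrix off the locus $x\parallel y$, and treat that locus separately.

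Your route buys rigor at the degenerate locus. The paper's comparison of coefficients, and its asserted smoothness of $a,b$, is only justified where $x,y$ are independent; your route isolates that locus explicitly and needs no Schwarz theorem. What the paper's route aims at, and yours does not deliver without extra work, is smoothness of $\alpha,\beta$, as required in Theorem~\ref{maintheorem1}.

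The slip: your Cramer computation for $\mu$ drops a factor $|y|^2$. In fact
\[
\mu=\frac{|y|^2\,b-\langle x,y\rangle\, a}{|y|^2(r-s^2)}=|y|^2\,\frac{B-sA}{r-s^2}.
\]
So you should set $\beta:=(B-sA)/(r-s^2)$, and then $\mu=|y|^2\beta$. As you wrote it, $\mu$ would be $0$-homogeneous in $y$, and $G$ would not have the claimed form. Your choice $\alpha=A$, $\beta=0$ on $r=s^2$ is right, since there $G=\lambda y$ with $\lambda=a/|y|^2=|y|A$.

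In your smoothness remark, Hadamard's lemma in $s$ for each fixed $r$ gives neither joint smoothness in $(r,s)$ nor anything at $r=0$. There the admissible set $\{s^2\le r\}$ collapses to the single point $s=0$. The clean version is the change of variables $u=r-s^2$. The domain becomes $\{u\ge 0\}$, and a function $f$ with $f(0,s)=0$ satisfies $f(u,s)=u\int_0^1 f_u(tu,s)\,dt$. This still presupposes that $A,B$ are smooth up to $u=0$, which requires Schwarz's theorem rather than the orbit argument.
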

\begin{proof}
The sufficiency can be obtained by a direct verification. We prove necessity below.

Identify the tangent bundle $T\Omega$ with \(\Omega \times\mathbb{R}^n\); local coordinates are \((x^i,y^i)\).
Then the  spray \(G\) is a smooth vector field on \(T\Omega\) given by \eqref{A1} as
\[
G = y^i\frac{\partial}{\partial x^i} - 2G^i(x,y)\frac{\partial}{\partial y^i},
\]
where the coefficients \(G^i(x,y)\) are smooth and positively homogeneous of degree $2$ in \(y\):
\(G^i(x,\lambda y)=\lambda^2 G^i(x,y)\) for all \(\lambda>0\).

For an orthogonal transformation \(U = (U^i_j) \in O(n)\) define the diffeomorphism
\[
\Psi_U : T\Omega\longrightarrow T\Omega,\qquad \Psi_U(x,y)=(Ux,Uy), \quad \forall\, (x,y)\in T\Omega.
\]
Since $G$ is spherically symmetric,
 \(\Psi_U\) preserve the spray:
\[
(\Psi_U)_* G = G, \qquad  \forall\, U\in O(n).
\]
This means that for any point \(p=(x,y)\in TM\),
\[
(\Psi_U)_*\bigl(G|_{\Psi_U^{-1}(p)}\bigr)=G|_p .
\]
Set \(q:=\Psi_U^{-1}(p)=(U^{-1}x,U^{-1}y)\).
Then the spray  at \(q\) is
\[
G_q = (U^{-1}y)^j\frac{\partial}{\partial x^j}\Big|_{q}
      -2G^j(q)\frac{\partial}{\partial y^j}\Big|_{q},
\]
where \(G^j(q)=G^j(U^{-1}x,U^{-1}y)\).

In coordinates \(\Psi_U\) reads
\[
(\Psi_U)^i(x,y)=(Ux)^i = U^i_j x^j,\qquad
(\Psi_U)^{n+i}(x,y)=(Uy)^i = U^i_j y^j.
\]
Hence the Jacobian matrix is constant:
\[
\frac{\partial (\Psi_U)^i}{\partial x^j}=U^i_j,\quad
\frac{\partial (\Psi_U)^i}{\partial y^j}=0,\quad
\frac{\partial (\Psi_U)^{n+i}}{\partial x^j}=0,\quad
\frac{\partial (\Psi_U)^{n+i}}{\partial y^j}=U^i_j.
\]
Consequently, for any tangent vector \((\xi,\eta)\) (horizontal component \(\xi\), vertical component \(\eta\)),
\[
d\Psi_U|_{(x,y)}(\xi,\eta)=(U\xi,U\eta).
\]
In particular, for the basis vectors:
\[
d\Psi_U|_q\Bigl(\frac{\partial}{\partial x^j}\Big|_{q}\Bigr)=U^i_j\frac{\partial}{\partial x^i}\Big|_{\Psi_U(q)}=U^i_j\frac{\partial}{\partial x^i}\Big|_{p},
\qquad
d\Psi_U|_q\Bigl(\frac{\partial}{\partial y^j}\Big|_{q}\Bigr)=U^i_j\frac{\partial}{\partial y^i}\Big|_{p}.
\]
Now
\begin{align*}
(\Psi_U)_*G|_p
 & = d\Psi_U|_q(G_q)
   = (U^{-1}y)^j\, d\Psi_U|_q\Bigl(\frac{\partial}{\partial x^j}\Big|_{q}\Bigr)
   -2G^j(q)\, d\Psi_U|_q\Bigl(\frac{\partial}{\partial y^j}\Big|_{q}\Bigr) \\
 & = (U^{-1}y)^j U^i_j\frac{\partial}{\partial x^i}\Big|_{p} -2G^j(q)\,U^i_j\frac{\partial}{\partial y^i}\Big|_{p}.
\end{align*}
In the first term, \((U^{-1}y)^jU^i_j = (UU^{-1}y)^i = y^i\), so it becomes \(y^i\frac{\partial}{\partial x^i}\big|_{p}\).
In the second term, \(-2G^j(q)U^i_j = -2U^i_j G^j(U^{-1}x,U^{-1}y)\).  Thus
\[
(\Psi_U)_*G|_p = y^i \frac{\partial}{\partial x^i}\Big|_{p}
                -2 U^i_j G^j(U^{-1}x,U^{-1}y) \frac{\partial}{\partial y^i}\Big|_{p}.
\]

On the other hand,
the spray at \(p\) itself is
\[
G|_p = y^i\frac{\partial}{\partial x^i}\Big|_{p} -2G^i(x,y)\frac{\partial}{\partial y^i}\Big|_{p}.
\]
Invariance \((\Psi_U)_*G|_p = G|_p\) must hold for every \(p\).  The horizontal parts are already equal, so we compare the coefficients of \(\frac{\partial}{\partial y^i}\):
\[
U^i_jG^j(U^{-1}x,U^{-1}y)=G^i(x,y).
\]
Replacing \(U\) by \(U^{-1}\) (which is also orthogonal) gives the equivalent form
\begin{align}\label{G^i(Ux,Uy)}
G^i(Ux,Uy)=U^i_j\,G^j(x,y).
\end{align}

Now we prove
the vector $\mathbb{G}(x,y) = (G^1(x,y),\dots,G^n(x,y))$ must belong to the linear span of \(x\) and \(y\).
Denote
\(V = \operatorname{span}\{x, y\} \subset \mathbb{R}^n.\)
We shall prove that \(\mathbb{G}(x,y) \in V\).

Take any non-zero vector \(z \in V^\perp\), i.e.,
\[
\langle z, x \rangle = 0, \quad \langle z, y \rangle = 0.
\]
Our aim is to show \(\langle \mathbb{G}(x,y), z \rangle = 0\), which implies \(G(x,y) \perp V^\perp\) and hence \(\mathbb{G}(x,y) \in (V^\perp)^\perp = V\).

Because \(z\) is orthogonal to both \(x\) and \(y\), one can define an orthogonal map \(\tilde{U} \in O(n)\) as follows:
\begin{itemize}
  \item On the subspace \(V\), let \(\tilde{U}\) be the identity: \(\tilde{U}|_V = \mathrm{Id}\).
  \item On the line \(\mathbb{R}z\) (which lies in \(V^\perp\)), set \(\tilde{U} = -\mathrm{Id}\); i.e. \(\tilde{U} z = -z\).
  \item On the orthogonal complement of \(\mathbb{R}z\) inside \(V^\perp\) (i.e. on \(\{ w \in V^\perp : w \perp z \}\)), let \(\tilde{U}\) be the identity.
\end{itemize}
Then
\[
\tilde{U}x = x,\qquad \tilde{U}y = y,\qquad \tilde{U}z = -z.
\]
Combining with \eqref{G^i(Ux,Uy)}, it follows that
\begin{align}\label{G^i_verctor}
\langle \mathbb{G}(x,y), z \rangle =\langle \mathbb{G}(\tilde{U}x,\tilde{U}y), z \rangle = \langle \tilde{U} \mathbb{G}(x,y), z \rangle.
\end{align}
Using the fact that orthogonal transformations preserve the inner product, we have
\[
\langle \tilde{U} \mathbb{G}(x,y), z \rangle = \langle \mathbb{G}(x,y), \tilde{U}^T z \rangle.
\]
Because \(\tilde{U}\) is orthogonal, \(\tilde{U}^T = \tilde{U}^{-1}\).  Since \(\tilde{U}^2 = I\) on the whole space (it is a reflection across the hyperplane \(V^\perp \cap \{z\}^\perp\)), we have \(\tilde{U}^{-1} = \tilde{U}\), so \(\tilde{U}^T z = \tilde{U} z = -z\). Consequently,
\[
\langle \tilde{U} \mathbb{G}(x,y), z \rangle = \langle \mathbb{G}(x,y), -z \rangle = -\langle \mathbb{G}(x,y), z \rangle.
\]
Equation \eqref{G^i_verctor} therefore yields
\[
\langle \mathbb{G}(x,y), z \rangle = -\langle \mathbb{G}(x,y), z \rangle,
\]
which implies 
\[
\langle \mathbb{G}(x,y), z \rangle = 0.
\]
Therefore \(\mathbb{G}(x,y) \in (V^\perp)^\perp = V\).

For \eqref{G^ialphabetanecessity}, since \(\mathbb{G}(x,y) \in  V = \operatorname{span}\{x,y\}\), there exist two smooth functions $a(x,y)$ and $b(x,y)$ such that
\[ G^i(x,y) = a(x,y) y^i + b(x,y) x^i.\]
Together with \eqref{G^i(Ux,Uy)}, it follows that
\[
G^i(Ux,Uy) = a(Ux,Uy) U^i_j y^j + b(Ux,Uy) U^i_j x^j = U^i_j ( a(x,y) y^j + b(x,y) x^j ) =  a(x,y) U^i_j y^j + b(x,y)  U^i_j x^j.
\]
Then
\[ a(Ux,Uy) = a(x,y), \qquad b(Ux,Uy) = b(x,y). \]
Since $U\in O(n)$, by Corollary \ref{Cor1} there exist smooth functions $\tilde{a}(|x|^2,|y|^2,\langle x,y\rangle)$ and $\tilde{b}(|x|^2,|y|^2,\langle x,y\rangle)$ such that
\begin{align}\label{G^iab}
G^i(x,y)=\tilde{a}(|x|^2,|y|^2,\langle x,y\rangle) \,y^i + \tilde{b}(|x|^2,|y|^2,\langle x,y\rangle) \,x^i.
\end{align}

Since $G^i(x,y)$ is positively homogeneous of degree 2 in  \(y\):
\[
G^i(x,\lambda y)=\lambda^2 G^i(x,y), \qquad \forall\, \lambda > 0.
\]
Applying this to \eqref{G^iab} gives
\[
\tilde{a}(|x|^2,\lambda^2|y|^2,\lambda\langle x,y\rangle)\, \lambda y^i
   +\tilde{b}(|x|^2,\lambda^2|y|^2,\lambda\langle x,y\rangle)\,x^i
   =\lambda^2 \tilde{a}(|x|^2,|y|^2,\langle x,y\rangle) y^i + \lambda^2 \tilde{b}(|x|^2,|y|^2,\langle x,y\rangle) x^i.
\]
Since \(x^i\) and \(y^i\) are linearly independent for generic points, we have
\[
\tilde{a}(|x|^2,\lambda^2|y|^2,\lambda\langle x,y\rangle) = \lambda \tilde{a}(|x|^2,|y|^2,\langle x,y\rangle), \qquad
\tilde{b}(|x|^2,\lambda^2|y|^2,\lambda\langle x,y\rangle) = \lambda^2  \tilde{b}(|x|^2,|y|^2,\langle x,y\rangle).
\]
Then for $y\neq 0$, by setting $\lambda = |y|$ and letting $r = |x|^2$ and $s =  \langle x, y \rangle /|y|$, the functions $\tilde{a}$ and $\tilde{b}$ can be replaced by
\[ \tilde{a}(|x|^2,|y|^2,\langle x,y\rangle) = |y| \alpha(r,s), \qquad \tilde{b}(|x|^2,|y|^2,\langle x,y\rangle) = |y|^2 \beta(r,s).  \]
This together with \eqref{G^iab} yields \eqref{G^ialphabetanecessity} and hence proves the sufficiency.
\end{proof}

\begin{lemma}\label{lem_spherical-spray_suffi}
Let \(\alpha, \beta: \mathbb{R}^2 \to \mathbb{R}\) be smooth functions. For \((x,y) \in \mathbb{R}^n \times (\mathbb{R}^n \setminus \{0\})\), define
\begin{align}\label{suffi_lem_G^i}
G^i(x,y) = |y| \, \alpha\!\left(|x|^2, \frac{\langle x,y\rangle}{|y|}\right) y^i + |y|^2 \, \beta\!\left(|x|^2, \frac{\langle x,y\rangle}{|y|}\right) x^i, \quad i=1,\dots,n.
\end{align}
Then the vector field
\[
G = y^i\frac{\partial}{\partial x^i} - 2G^i(x,y)\frac{\partial}{\partial y^i}
\]
is a spherically symmetric spray.
\end{lemma}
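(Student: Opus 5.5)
The plan is to verify the two defining properties directly: that \eqref{suffi_lem_G^i} defines a spray, and that this spray is invariant under every $\Psi_U$, $U\in O(n)$. No earlier invariant-theory result is needed; the work is done by the transformation rule derived in the preceding lemma, read in reverse.

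\textbf{Spray property.} On $\mathbb{R}^n\times(\mathbb{R}^n\setminus\{0\})$ the maps $x\mapsto |x|^2$, $y\mapsto |y|$ and $(x,y)\mapsto \langle x,y\rangle/|y|$ are smooth. Since $\alpha,\beta$ are smooth on $\mathbb{R}^2$, each $G^i$ is smooth on $T\Omega\setminus\{0\}$.

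For homogeneity, take $\lambda>0$. Then $s(x,\lambda y)=\lambda\langle x,y\rangle/(\lambda|y|)=s(x,y)$ and $r$ does not depend on $y$. Hence
\[
G^i(x,\lambda y)=\lambda|y|\,\alpha\,\lambda y^i+\lambda^2|y|^2\beta\, x^i=\lambda^2G^i(x,y),
\]
so $G$ has the form \eqref{A1} with coefficients positively homogeneous of degree two. Setting $G^i(x,0)=0$ is the usual convention on the zero section, and the definition of a spray only requires smoothness off it.

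\textbf{Symmetry.} The computation in the previous lemma shows that $(\Psi_U)_*G=G$ for all $U\in O(n)$ if and only if \eqref{G^i(Ux,Uy)} holds, i.e. $G^i(Ux,Uy)=U^i_jG^j(x,y)$. Each step there is an equivalence: the pushforward is computed with the constant Jacobian, and the horizontal parts agree automatically. So it suffices to check this identity.

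Orthogonality gives $|Ux|^2=|x|^2$, $|Uy|=|y|$ and $\langle Ux,Uy\rangle=\langle x,y\rangle$, so $r$ and $s$ are unchanged. Therefore
\[
G^i(Ux,Uy)=|y|\alpha(r,s)(Uy)^i+|y|^2\beta(r,s)(Ux)^i=U^i_j\bigl(|y|\alpha\, y^j+|y|^2\beta\, x^j\bigr)=U^i_jG^j(x,y).
\]
This shows that $G$ is spherically symmetric in the sense of \eqref{A3}.

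\textbf{Main obstacle.} The only delicate point is interpretive: condition \eqref{A3} must be read as invariance of the vector field, $(\Psi_U)_*G=G$, and not as literal invariance of the components $G^i$. The components transform as a vector, as \eqref{G^i(Ux,Uy)} shows.

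To settle this, I would briefly recompute $d\Psi_U(G_q)$ at $q=\Psi_U^{-1}(p)$, as in the previous lemma, and observe that it equals $G_p$ exactly when the identity above holds. The rest of the argument is direct substitution.
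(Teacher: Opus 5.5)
Your proposal is correct and follows essentially the paper's proof. You check degree-two homogeneity using the invariance of $s$ under $y\mapsto\lambda y$, then reduce $(\Psi_U)_*G=G$ to the transformation rule for the components and verify it using the $O(n)$-invariance of $r$ and $s$; the only cosmetic difference is that you cite the equivalence from the preceding lemma in the form $G^i(Ux,Uy)=U^i_jG^j(x,y)$, whereas the paper recomputes the pushforward and checks the equivalent identity $U^j_iG^i(U^{-1}x,U^{-1}y)=G^j(x,y)$ directly.
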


\begin{proof}
We need to verify two properties: 1. \(G\) is a spray, i.e., its coefficients \(G^i\) are positively homogeneous of degree \(2\) in \(y\); 2. \(G\) is spherically symmetric, i.e., it is invariant under the diagonal action of the orthogonal group \(O(n)\).

\textbf{Step 1: Homogeneity.}
For any \(\lambda > 0\), we compute
\begin{align*}
G^i(x,\lambda y)
&= |\lambda y| \, \alpha\!\left(|x|^2, \frac{\langle x,\lambda y\rangle}{|\lambda y|}\right) (\lambda y)^i + |\lambda y|^2 \, \beta\!\left(|x|^2, \frac{\langle x,\lambda y\rangle}{|\lambda y|}\right) x^i \\
&= \lambda^2 G^i(x, y).
\end{align*}
Thus \(G^i\) is positively homogeneous of degree \(2\) in \(y\), so \(G\) satisfies the spray condition.

\textbf{Step 2: Spherical symmetry.}
For any orthogonal transformation \(U \in O(n)\), define \(\Psi_U: \mathbb{R}^n \times (\mathbb{R}^n \backslash \{0\}) \to \mathbb{R}^n \times (\mathbb{R}^n \setminus \{0\})\) by \(\Psi_U(x,y) = (Ux, Uy)\). The map is well-defined because \(Uy \neq 0\) whenever \(y \neq 0\).

We must show that \((\Psi_U)_* G= G\) for all \(U = \left( U^j_i \right) \in O(n)\). The tangent map of \(\Psi_U\) acts on basis vectors as
\[
d\Psi_U\left( \frac{\partial}{\partial x^i} \right) = U^j_i \frac{\partial}{\partial x^j}, \qquad
d\Psi_U\left( \frac{\partial}{\partial y^i} \right) = U^j_i \frac{\partial}{\partial y^j}.
\]

First, compute \(G\) at the point \(\Psi_U^{-1}(x,y) = (U^{-1}x, U^{-1}y)\):
\[
G|_{(U^{-1}x, U^{-1}y)} = (U^{-1}y)^i \frac{\partial}{\partial x^i} - 2G^i(U^{-1}x, U^{-1}y) \frac{\partial}{\partial y^i}.
\]

Pushing forward by \((\Psi_U)_*\) gives
\begin{align*}
(\Psi_U)_* G|_{(x,y)}
&= (U^{-1}y)^i d\Psi_U\!\left( \frac{\partial}{\partial x^i} \right) - 2G^i(U^{-1}x, U^{-1}y) d\Psi_U\!\left( \frac{\partial}{\partial y^i} \right)
\\
&= (U^{-1}y)^i U^j_i \frac{\partial}{\partial x^j} - 2G^i(U^{-1}x, U^{-1}y) U^j_i \frac{\partial}{\partial y^j},
\end{align*}
where first term is simplified because \((U^{-1}y)^i U^j_i = (U U^{-1} y)^j = y^j\). Hence
\begin{align}\label{lem_suffi_PsiU*G}
(\Psi_U)_* G|_{(x,y)} = y^j \frac{\partial}{\partial x^j} - 2\left( G^i(U^{-1}x, U^{-1}y) U^j_i \right) \frac{\partial}{\partial y^j}. \tag{1}
\end{align}

It remains to show that
\begin{align}\label{lem_suffi_G^i_UxUy}
G^i(U^{-1}x, U^{-1}y) U^j_i = G^j(x,y). \tag{2}
\end{align}

Substituting the explicit form of \(G^i\) in \eqref{suffi_lem_G^i} yields
\begin{align*}
G^i(U^{-1}x, U^{-1}y)
& = |U^{-1}y| \, \alpha\!\left( |U^{-1}x|^2, \frac{\langle U^{-1}x, U^{-1}y\rangle}{|U^{-1}y|} \right) (U^{-1}y)^i \\
& \quad + |U^{-1}y|^2 \, \beta\!\left( |U^{-1}x|^2, \frac{\langle U^{-1}x, U^{-1}y\rangle}{|U^{-1}y|} \right) (U^{-1}x)^i.
\end{align*}
Since orthogonal transformations preserve lengths and inner products, we have
\[
|U^{-1}y| = |y|,\quad |U^{-1}x|^2 = |x|^2,\quad \frac{\langle U^{-1}x, U^{-1}y\rangle}{|U^{-1}y|} = \frac{\langle x,y\rangle}{|y|}.
\]
Therefore
\[
G^i(U^{-1}x, U^{-1}y) = |y| \, \alpha\!\left( |x|^2, \frac{\langle x,y\rangle}{|y|} \right) (U^{-1}y)^i + |y|^2 \, \beta\!\left( |x|^2, \frac{\langle x,y\rangle}{|y|} \right) (U^{-1}x)^i.
\]
Multiplying by \(U^j_i\) and summing over \(i\) gives
\[
G^i(U^{-1}x, U^{-1}y) U^j_i = |y| \alpha (U^{-1}y)^i U^j_i + |y|^2 \beta (U^{-1}x)^i U^j_i.
\]

Now since \((U^{-1}y)^i U^j_i = y^j\) and \((U^{-1}x)^i U^j_i = x^j\), we have
\[
G^i(U^{-1}x, U^{-1}y) U^j_i = |y| \alpha y^j + |y|^2 \beta x^j = G^j(x,y),
\]
which establishes \eqref{lem_suffi_G^i_UxUy}.
Substituting \eqref{lem_suffi_G^i_UxUy} into \eqref{lem_suffi_PsiU*G} yields
\[
(\Psi_U)_* G|_{(x,y)} = y^j \frac{\partial}{\partial x^j} - 2G^j(x,y) \frac{\partial}{\partial y^j} = G|_{(x,y)}.
\]

Hence \(G\) is spherically symmetric.
\end{proof}

Recall that a \textit{Finsler metric} on $M$ is a non-negative function $F: TM \longrightarrow [0, +\infty)$ satisfying:
\begin{enumerate}[(i)]
\item $F$ is smooth on $TM_0 := TM \setminus \{0\}$;
\item $F(x,\lambda y) = \lambda F(x,y)$ for all $\lambda > 0$ (positive homogeneity of degree one);
\item For every $y \neq 0$, the matrix $g_{ij}(x,y) := \frac{1}{2}[F^2]_{y^i y^j}$ is positive definite.
\end{enumerate}
Every Finsler metric induces a spray via
\[
G^i = \frac{1}{4} g^{ij}\bigl\{ [F^2]_{y^j x^k} y^k - [F^2]_{x^j} \bigr\},
\]
where $g^{ij}$ is the inverse of $g_{ij}$.

Let $\Pi = \operatorname{span}\{y,u\}$ be a $2$-dimensional plane in $T_xM$ with $y,u \in T_xM\setminus\{0\}$.
The \textit{flag curvature} $\mathcal{K}(\Pi, y)$ with flagpole $y$ is defined as
\[
\mathcal{K}(\Pi, y) = \frac{g_y(u, R_y(u))}{g_y(y,y)g_y(u,u) - [g_y(y,u)]^2},
\]
where $g_y = g_{ij}(x,y)dx^i\otimes dx^j$ and $R_y(u) = R^i_{\,k} dx^k \otimes \frac{\partial}{\partial x^i}$, with $R^i_{\,k}$ given by \eqref{B1}.

With these preparations, we can now derive a criterion for the metrizability of a spherically symmetric spray.
\begin{corollary}\label{maincorollary}
Let $G$ be a spherically symmetric spray on a domain $\Omega \subset \mathbb{R}^n$, ($n\geq 3$). If $G$ is of non-zero isotropic curvature and can be induced by a Finsler metric $F$, then  $F$ must be a spherically symmetric Finsler metric.
\end{corollary}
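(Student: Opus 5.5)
The plan is to show that the metric is forced to be a constant multiple of the square root of the curvature scalar $R$ of $G$, and that $R$ inherits the $O(n)$-invariance of $G$.

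\emph{Step 1: $R$ is $O(n)$-invariant.} From \eqref{G^i(Ux,Uy)}, i.e. $G^i(Ux,Uy)=U^i_jG^j(x,y)$, differentiate in $x$ and $y$ and substitute into \eqref{B1}. Since $U$ is constant, each term transforms tensorially, which gives
\[
R^i_{\ k}(Ux,Uy)=U^i_j\,R^j_{\ l}(x,y)\,(U^{-1})^l_k .
\]
Taking the trace of \eqref{A5} gives $R^i_{\ i}=(n-1)R$, because $\tau_ky^k=R$. Hence
\[
R(Ux,Uy)=R(x,y)\qquad\text{for all } U\in O(n).
\]

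\emph{Step 2: comparing with the Finsler curvature form.} Suppose $F$ induces $G$. Since $G$ is of scalar (indeed isotropic) curvature, $F$ is of scalar flag curvature $\mathcal{K}=\mathcal{K}(x,y)$, and the standard identity gives
\[
R^i_{\ k}=\mathcal{K}F^2\delta^i_k-\mathcal{K}FF_{y^k}y^i .
\]
Comparing with \eqref{A5} yields $R=\mathcal{K}F^2$ and $\tau_k=\mathcal{K}FF_{y^k}$. The isotropic condition \eqref{A6} says
\[
\tau_k=\tfrac12 R_{y^k}=\mathcal{K}FF_{y^k}+\tfrac12F^2\mathcal{K}_{y^k}.
\]
Therefore $\mathcal{K}_{y^k}=0$, so $\mathcal{K}=\mathcal{K}(x)$. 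Because $n\ge 3$, the Finslerian Schur lemma then gives that $\mathcal{K}=c$ is a constant.

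\emph{Step 3: conclusion.} The hypothesis ``non-zero isotropic curvature'' means $R\not\equiv0$, so $c\neq 0$. Hence
\[
F^2=R/c,
\]
and $R/c>0$ on $T\Omega\setminus\{0\}$ because $F>0$ there. By Step 1, $F(Ux,Uy)=\sqrt{R(Ux,Uy)/c}=\sqrt{R(x,y)/c}=F(x,y)$, so $F$ is a spherically symmetric Finsler metric.

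\emph{Main obstacle.} The only non-routine point is invoking the Finsler facts correctly: the scalar flag curvature form of $R^i_{\ k}$ for a metrizable spray of scalar curvature, and Schur's lemma (which is where $n\ge3$ is needed). I would quote both from the standard literature rather than reprove them. I also need to justify that ``non-zero'' rules out $c=0$, which is immediate from $R=cF^2$.
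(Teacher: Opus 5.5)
Your proof is correct and follows the same route as the paper. The chain is: $R^m_{\ m}$ is $O(n)$-invariant, $R=\mathcal{K}F^2$ with $\mathcal{K}=\mathcal{K}(x)$, Schur's lemma for $n\ge 3$ makes $\mathcal{K}$ a nonzero constant $c$, and hence $F^2=R/c$ is spherically symmetric. You also fill in two steps the paper only asserts: you get invariance from the tensorial transformation of $R^i_{\ k}$ under $G^i(Ux,Uy)=U^i_jG^j(x,y)$ rather than a ``direct computation'' from the canonical form, and you derive $\mathcal{K}_{y^k}=0$ from \eqref{A6}.
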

\begin{proof}
By Theorem \ref{maintheorem1} and the curvature expression \eqref{B1}, a direct computation shows $R^m_{\ m}$ is a spherically symmetric function.
Since $G$ is induced by a Finsler metric $F$ and with non-zero isotropic curvature, we have
\[ R^m_{\ m} = (n-1)k(x) F^2, \]
where $k(x) \neq 0$ is a scalar function of $x$. Since $n\geq 3$, by Schur's Lemma, it follows that $k(x)= \lambda = const.$ Then
\[ F^2 = \frac{1}{\lambda(n-1)} R^m_{\ m}, \]
which implies $F$ is a spherically symmetric function and hence a  spherically symmetric Finsler metric.
\end{proof}

\section{Projectively flat spherically symmetric sprays of isotropic curvature}\label{section4}
In this section we first derive the explicit formula for the Riemann curvature tensor of a projectively flat spherically symmetric spray, which is needed for the classification.

As stated in Introduction, a spray $G$ is of \textit{isotropic curvature} if \eqref{A5} and \eqref{A6} hold.
A direct computation shows that $R= \frac{1}{n-1}R^m_{\ m}$ in this case, i.e.,
a spray $G$ is of \textit{isotropic curvature} if
   \begin{equation}\label{B2}
	R^i_{\ k}=\frac{1}{n-1}R^m_{\ m}\delta^i_k-\frac{1}{2(n-1)}(R^m_{\ m})_{y^k}y^i.
   \end{equation}
In particular, spray $G$ is of zero curvature if $R^i_{\ k}=0$.

Let $G$ be a projectively flat spherically symmetric spray with $G^i = |y| p(r,s) y^i$, where $r=|x|^2$ and $s=\langle x,y\rangle/|y|$.
We need the following partial derivatives (computations are straightforward):
\begin{align*}
	\frac{\partial G^i}{\partial x^k}&=y^i\vert y\vert\left(\frac{\partial r}{\partial x^k}p_r+\frac{\partial s}{\partial x^k}p_s\right),\qquad
	\frac{\partial G^i}{\partial y^k}=\left(\frac{y_k}{\vert y\vert}y^i+\vert y\vert\delta^i_k\right)p+y^i\vert y\vert\frac{\partial s}{\partial y^k}p_s,\\
	\frac{\partial^2 G^i}{\partial x^j\partial y^k} %& =	\frac{\partial}{\partial x^j}\left(\left(\frac{y_k}{\vert y\vert}y^i+\vert y\vert\delta^i_k\right)p+y^i\vert y\vert\frac{\partial s}{\partial y^k}p_s\right)\\
	&=\left(\frac{y_k}{\vert y\vert}y^i+\vert y\vert\delta ^i_k\right)\left(\frac{\partial r}{\partial x^j}p_r+\frac{\partial s}{\partial x^j}p_s\right)+y^i\vert y\vert\left(\frac{\partial r}{\partial x^j}\frac{\partial s}{\partial y^k}p_{rs}+\frac{\partial s}{\partial x^j}\frac{\partial s}{\partial y^k}p_{ss}+	\frac{\partial^2 s}{\partial x^j\partial y^k}p_s\right),\\
	\frac{\partial^2 G^i}{\partial y^j\partial y^k} %&=\frac{\partial}{\partial y^j}\left(\left(\frac{y_k}{\vert y\vert}y^i+\vert y\vert\delta^i_k\right)p+y^i\vert y\vert\frac{\partial s}{\partial y^k}p_s\right)\\
	&=\left(\frac{y_k}{\vert y\vert}\delta^i_j+\frac{y_j}{\vert y\vert}\delta^i_k+y^i\frac{\delta_{kj}\vert y\vert^2-y_ky_j}{\vert y\vert^3}\right)p+y^i\vert y\vert\frac{\partial s}{\partial y^k}\frac{\partial s}{\partial y^j}p_{ss}\\
	&\quad+\left[\frac{y^i}{\vert y\vert}\left(y_j\frac{\partial s}{\partial y^k}+y_k\frac{\partial s}{\partial y^j}\right)+\vert y\vert\left(\delta^i_k\frac{\partial s}{\partial y^j}+\delta^i_j\frac{\partial s}{\partial y^k}\right)+y^i\vert y\vert\frac{\partial^2s}{\partial y^ky^j}\right]p_s,
	\end{align*}
	where
	   \begin{equation*}
		\begin{aligned}
			\frac{\partial r}{\partial x^k}&=2x_k,\qquad\frac{\partial s}{\partial x^k}=\frac{y_k}{\vert y\vert},\qquad\frac{\partial s}{\partial y^k}=\frac{x_k\vert y\vert^2-y_k\langle x,y\rangle}{\vert y\vert^3},\\
			\frac{\partial^2 s}{\partial x^jy^k}&=\frac{\delta_{kj}\vert y\vert^2-y_ky_j}{\vert y\vert^3},\quad\frac{\partial^2 s}{\partial y^jy^k}=-\frac{\langle x,y\rangle}{\vert y\vert^3}\delta_{kj}+\frac{3\langle x,y\rangle y_ky_j-\vert y\vert^2(x_ky_j+x_jy_k)}{\vert y\vert^5}.
		\end{aligned}
	\end{equation*}

After substituting into the formula for the Riemann curvature tensor \eqref{B1},
a lengthy but direct calculation yields the following compact expression:
\begin{align}\label{B3}
\begin{split}
R^i_{\ k}&=|y|^2(p^2-2sp_r-p_s)\delta^i_k\\
&\quad -\Big\{|y|(pp_s+2sp_{rs}-4p_r+p_{ss})x_k
-\big[(sp+1)p_s+2s(sp_{rs}-p_r)+sp_{ss}-p^2\big]y_k\Big\}y^i.
\end{split}
\end{align}

Now we prove Theorem \ref{maintheorem2}. The proof is decomposed into two propositions.
\begin{prop}\label{prop4.1}
Let $G$ be a projectively flat spherically symmetric spray with $G^i = |y|\,p(r,s)\,y^i$, $r=|x|^2$, $s=\langle x,y\rangle/|y|$.
Then $G$ is of isotropic curvature if and only if
there exist smooth functions $u(r-s^2)$ and $v(r)$ such that
\begin{equation}\label{C1}
p(r, s)=\left[\int \frac{u(r-s^2)}{s^2}ds+v(r)\right]s.
\end{equation}
\end{prop}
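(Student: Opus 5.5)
The plan is to reduce the isotropic curvature condition \eqref{B2} to a single linear second-order PDE for $p(r,s)$, and then integrate that PDE explicitly. Starting from the compact form \eqref{B3}, I read off the coefficient of $\delta^i_k$ and set
\[
R=|y|^2\Phi,\qquad \Phi:=p^2-2sp_r-p_s .
\]
I would also define
\[
\tau_k=|y|A\,x_k-B\,y_k,\qquad A:=pp_s+2sp_{rs}-4p_r+p_{ss},\qquad B:=(sp+1)p_s+2s(sp_{rs}-p_r)+sp_{ss}-p^2 .
\]
A direct check gives $\tau_ky^k=|y|^2(sA-B)=|y|^2\Phi=R$. So $R^i_{\ k}$ is always of the scalar form \eqref{A5}, and isotropy reduces to requiring $\tau_k=\frac12R_{y^k}$.

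Next I compute $\frac12 R_{y^k}$ using $s_{y^k}=(x_k-sy_k/|y|)/|y|$:
\[
\tfrac12R_{y^k}=\tfrac{|y|}{2}\Phi_s\,x_k+\bigl(\Phi-\tfrac s2\Phi_s\bigr)y_k .
\]
Since $x$ and $y$ are linearly independent on an open dense set (for $n\ge 2$), and everything is continuous, the condition $\tau_k=\frac12 R_{y^k}$ is equivalent to two scalar equations:
\[
A=\tfrac12\Phi_s,\qquad -B=\Phi-\tfrac s2\Phi_s .
\]
The identity $sA-B=\Phi$ shows that the second equation follows from the first. Expanding $\Phi_s=2pp_s-2p_r-2sp_{rs}-p_{ss}$, the nonlinear terms $pp_s$ cancel, and the first equation becomes the linear PDE
\[
p_{ss}+2s\,p_{rs}-2p_r=0 .
\]
Thus $G$ has isotropic curvature if and only if $p$ satisfies this equation.

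To solve it, I substitute $p=s\,q$. The equation becomes $2q_s+sq_{ss}+2s^2q_{rs}=0$. Setting $w:=s^2q_s$, one has $w_s=s(2q_s+sq_{ss})$ and $w_r=s^2q_{rs}$, so the equation is exactly the first-order transport equation
\[
w_s+2s\,w_r=0 .
\]
Its characteristics are $r-s^2=\text{const}$, so $w=u(r-s^2)$ for an arbitrary smooth $u$. Hence $q_s=u(r-s^2)/s^2$, and integrating in $s$ with an $r$-dependent constant $v(r)$ gives \eqref{C1}. Conversely, every $p$ of the form \eqref{C1} satisfies $w=s^2q_s=u(r-s^2)$, and reversing the steps shows it solves the PDE, hence gives isotropic curvature.

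I expect the main obstacle to be regularity rather than the algebra. The substitution $p=sq$ and the integrand $u/s^2$ are singular at $s=0$, which is exactly where $x\perp y$. I would therefore carry out the integration on $\{s\neq0\}$ and require, as the statement does, that the antiderivative be chosen so that the resulting $p$ extends smoothly across $s=0$; the PDE then holds everywhere by continuity. The other delicate point is the linear-independence argument, which must be applied on the dense set where $x$ and $y$ are independent and then extended by continuity. I would also verify the coefficient identity $sA-B=\Phi$ carefully, because it is what makes the system collapse to one equation.
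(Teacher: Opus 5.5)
Your proposal is correct and follows essentially the same route as the paper. Both arguments use (B3) to reduce isotropy to the linear PDE $p_{ss}+2sp_{rs}-2p_r=0$; the paper does this through the difference identity (C4), while you compare $\tau_k$ with $\frac12 R_{y^k}$ using $sA-B=\Phi$. Both then set $p=sq$ and integrate the resulting first-order equation along the characteristics $r-s^2=\mathrm{const}$, where your $w=s^2q_s$ is just $s^2$ times the paper's $\hat f$. Moreover, $w=sp_s-p$ is smooth across $s=0$, which resolves the regularity concern you raised: it yields a single $u$ (namely $u(c)=-p(c,0)$) valid on both sides of $s=0$.
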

\begin{proof}
From \eqref{B3} we obtain
   \begin{align*}
	R^m_{\ m}&= \vert y\vert^2(n-1)(p^2-2sp_r-p_s),\\
	(R^m_{\ m})_{y^k}& =\vert y\vert(n-1)(2pp_s-2sp_{rs}-2p_r-p_{ss})x_k\\
	& \quad+(n-1)\left\{2\left[p^2-(sp+1)p_s+s(sp_{rs}-p_r)\right]+sp_{ss}\right\}y_k.
   \end{align*}
%Then
%   \begin{align}\label{C3}
%   &\quad \frac{1}{n-1}R^m_{\ m}\delta^i_k-\frac{1}{2(n-1)}(R^m_{\ m})_{y^k}y^i \notag \\
%   &=\vert y\vert^2(p^2-2sp_r-p_s)\delta^i_k-\Big\{\vert y\vert(pp_s-sp_{rs}-p_r-\frac{1}{2}p_{ss})x_k \notag\\
%   &\quad+[p^2-(sp+1)p_s+s(sp_{rs}-p_r)+\frac{1}{2}sp_{ss}]y_k\Big\}y^i.
%   \end{align}
Using these, a direct computation shows that
\begin{align}
R^i_{\ k}-\left[\frac{1}{n-1}R^m_{\ m}\delta^i_k-\frac{1}{2(n-1)}(R^m_{\ m})_{y^k}y^i\right]
=3(sp_{rs}-p_r+ \frac{1}{2} p_{ss} )(sy_k-\vert y\vert x_k)y^i.\label{C4}
\end{align}
By definition, a spray is of isotropic curvature if and only if \eqref{B2} holds, which is equivalent to the vanishing of the left-hand side of \eqref{C4}. Since $sy_k-|y|x_k$ is not identically zero for non-collinear $x,y$, we obtain the necessary and sufficient condition
\begin{equation}\label{C5}
		s p_{rs}-p_r+ \frac{1}{2} p_{ss}=0.
	\end{equation}
%Now	we prove the necessity first.
%If spray $G$ is of isotropic curvature, by \eqref{B2} and \eqref{C4}, we have
%	\begin{equation*}
%		3(sp_{rs}-p_r+ \frac{p_{ss}}{2})(sy_k-\vert y\vert x_k)y^i=0.
%	\end{equation*}
%If vector $y$ is parallel to vector $x$, the above equation is satisfied directly. Then we consider the general case when vector $y$ and vector $x$ is not parallel, we get
%	\begin{equation}\label{C5}
%		sp_{rs}-p_r+ \frac{p_{ss}}{2}=0.
%	\end{equation}

To solve \eqref{C5}, set $p(r,s)=s f(r,s)$. Then
\[ p_r=sf_r, \quad p_s=f+sf_s, \quad p_{rs}=f_r+sf_{rs}, \quad p_{ss}=2f_s+sf_{ss}, \]
and \eqref{C5} becomes
	\begin{equation}\label{C6}
		s^2 f_{rs} + f_s + \frac{sf_{ss}}{2} = 0.
	\end{equation}
Let $\hat{f}= f_s$. Then \eqref{C6} reduces to the first-order linear PDE
	\begin{equation}\label{C7}
		s^2\hat{f}_r+\hat{f}+\frac{s\hat{f}_s}{2}=0.
	\end{equation}
Its characteristic system is
	\begin{equation*}
		\frac{dr}{s^2}=\frac{ds}{s/2}=\frac{d\hat{f}}{-\hat{f}}.
	\end{equation*}
From $dr/s^2 = ds/(s/2)$ we get $dr=2s\,ds$, so $r=s^2+c_1$, i.e. $c_1=r-s^2$.
From $ds/(s/2)=d\hat{f}/(-\hat{f})$ we get $2\ln s = -\ln\hat{f}+\ln c_2$, hence $\hat{f}=c_2/s^2$.
Along characteristics, $c_2$ is a function of $c_1$, thus
\[
\hat{f}(r,s) = \frac{u(r-s^2)}{s^2},
\]
for some smooth function $u$. Then $f_s = u(r-s^2)/s^2$ implies
\[
 f(r,s) = \int \frac{u(r-s^2)}{s^2}\,ds + v(r),
\]
with $v(r)$  arbitrary.
Finally $p=sf$ gives \eqref{C1}. The converse follows by direct substitution.
\end{proof}

We now consider the zero curvature case. Recall that a spray $G$ has zero curvature if $R^i_{\ k}=0$.
Using \eqref{B3} and the linear independence of $x^i$ and $y^i$ for non-collinear vectors, the condition $R^i_{\ k}=0$ splits into three equations:
   \begin{align}
	p^2-2sp_r-p_s &=0,\label{C8}\\
	pp_s+2sp_{rs}-4p_r+p_{ss} &=0,\label{C9}\\
	(sp+1)p_s+2s(sp_{rs}-p_r)+sp_{ss}-p^2 &=0.\label{C10}
   \end{align}
A direct calculation shows that $s\times\eqref{C9}-\eqref{C8}$ reduces to \eqref{C10}; hence the system \eqref{C8}--\eqref{C10} is equivalent to  \eqref{C8} and \eqref{C9} alone:
\begin{equation}\label{0_Rik}
R^i_{\ k}=0 \quad\Longleftrightarrow\quad \text{\eqref{C8} and \eqref{C9} hold}.
\end{equation}
%A direct substitution of the general isotropic  curvature solution \eqref{C1} into \eqref{C8} and \eqref{C9} leads to complicated conditions on $u$ and $v$; it is more efficient to solve these equations directly for $p(r,s)$.

\begin{prop}\label{prop4.2}
Let $G$ be a projectively flat spherically symmetric spray  with $G^i=|y|p(r,s)y^i$. Then $G$ has zero curvature if and only if $p(r,s)$ takes one of the following forms:
\begin{enumerate}[\rm (i)]
\item \label{prop4.2_casei} $p(r,s)\equiv 0$, defined on $\mathbb{R}^n\times(\mathbb{R}^n\backslash\{0\})$;
\item \begin{equation}\label{C11}
p(r,s)= \frac{s\pm\sqrt{s^2-r+c}}{c-r},
\end{equation}
where $c>0$ is an arbitrary constant, and the spray is defined on $B_{\sqrt{c}}^n(0)\times(\mathbb{R}^n\backslash\{0\})$, with $B_{\sqrt{c}}^n(0)$ the open ball of radius $\sqrt{c}$.
\end{enumerate}
\end{prop}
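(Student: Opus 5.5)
Since zero curvature is a special case of isotropic curvature, I would first use Proposition \ref{prop4.1}: if $R^i_{\ k}=0$, then \eqref{C5} holds, that is, $sp_{rs}-p_r=-\tfrac12 p_{ss}$. Substituting this into \eqref{C9} removes all second-order terms, since $2(sp_{rs}-p_r)+p_{ss}=0$. Equation \eqref{C9} then reduces to the first-order relation $pp_s-2p_r=0$, i.e. $p_r=\tfrac12 pp_s$. Inserting this into \eqref{C8} gives $p^2-sp\,p_s-p_s=0$. By \eqref{0_Rik}, zero curvature should therefore be equivalent to the first-order system
\begin{equation*}
(1+sp)\,p_s=p^2,\qquad 2p_r=p\,p_s,
\end{equation*}
together with \eqref{C5}. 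The converse will follow by direct substitution into \eqref{C8}--\eqref{C9}, so I will not rely on \eqref{C5} being implied.

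Next I would solve the first equation as an ODE in $s$, with $r$ as a parameter. The trivial solution is $p\equiv0$, which gives case (i). If $p\not\equiv0$, then $p_s\neq0$ on an open set. There I would regard $s$ as a function of $p$, so that $ds/dp=1/p^2+s/p$. This is linear, and it gives $\frac{d}{dp}(s/p)=1/p^3$, hence $s/p=-\frac1{2p^2}+k(r)$. Equivalently, $p$ satisfies
\begin{equation*}
2k(r)p^2-2sp-1=0 .
\end{equation*}
To fix $k$, I would differentiate this implicit relation in $r$ and in $s$. This yields $p_r=-2k'p^2/(4kp-2s)$ and $p_s=2p/(4kp-2s)$. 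Plugging both into $2p_r=pp_s$ forces $k'(r)=-\tfrac12$, so $k=\tfrac12(c-r)$ for some constant $c$. Solving the quadratic $(c-r)p^2-2sp-1=0$ then gives $p=\bigl(s\pm\sqrt{s^2-r+c}\bigr)/(c-r)$, which is \eqref{C11}.

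The step I expect to be delicate is the global and regularity discussion rather than the algebra. This includes justifying the inversion $p\mapsto s$, treating points where $p_s=0$ or $4kp-2s=0$, and showing that a smooth solution cannot switch between the zero branch and the nonzero branch, or between the two signs. For the last point I would argue by analyticity or continuity: on the open set where $p\neq0$ the relation holds with a locally constant $c$, and it forces $p\neq0$ wherever it is defined.

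Finally I would determine the domain. The expression must be smooth for all $y\neq0$, and $s$ can take any value in $[-\sqrt r,\sqrt r]$, including $s=0$ when $n\ge2$. Requiring $s^2-r+c>0$ and $c-r\neq0$ for all such $s$, in a domain containing the origin, forces $r<c$, hence $c>0$, and the spray lives on $B^n_{\sqrt c}(0)$. I would close by checking by direct substitution that both (i) and \eqref{C11} satisfy \eqref{C8} and \eqref{C9}.
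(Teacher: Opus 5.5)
Your proposal is correct and follows essentially the paper's route: reduce zero curvature to $pp_s=2p_r$ and $(1+sp)p_s=p^2$, integrate the latter to $2h(r)p^2-2sp-1=0$, force $h'=-\tfrac12$, and use $s=0$ to get $c-r>0$. The differences are cosmetic: you obtain $pp_s=2p_r$ via \eqref{C5} rather than from the $s$-derivative of \eqref{C8}, which is equivalent, and you fix $h'$ by implicit differentiation rather than by substituting the explicit root. Your continuity/connectedness argument, showing that $p$ cannot vanish on only part of the domain or switch roots, fills a point the paper passes over.
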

\begin{proof}
We need to solve \eqref{C8} and \eqref{C9}. The case $p\equiv0$ trivially satisfies both. Assume $p\not\equiv0$.
Differentiate \eqref{C8} with respect to $s$ yields
 \[ 2p p_s-2p_r-2 s p_{rs}-p_{ss}=0.\]
Substituting into \eqref{C9} eliminates $p_{rs}$ and yields $pp_s-2p_r=0$. Then using \eqref{C8} to eliminate $p_r$ gives
\[ p^2- s p p_s - p_s=0. \]
Since $p\not\equiv0$, we have
\[
\left(\frac{s}{p}+\frac{1}{2p^2}\right)_s = \frac{p^2-spp_s-p_s}{p^3}=0,
\]
so there exists a function $h(r)$ such that
\[
\frac{s}{p}+\frac{1}{2p^2}=h(r) \quad\Longrightarrow\quad 2h(r)p^2-2sp-1=0.
\]
If $h(r_0)=0$ for some $r_0$, then $p(r_0,s)=-1/(2s)$, which is not well-defined at $s=0$ (since $y$ can be arbitrary). Hence $h(r)\neq0$ everywhere. At $s=0$ we get $h(r)=1/(2p^2)>0$. Solving the quadratic gives
\begin{equation}
p(r,s)=\frac{s\pm\sqrt{s^2+2h(r)}}{2h(r)}. \label{C13}
\end{equation}
Substituting this into \eqref{C8} leads to
\[
\frac{(h'(r)+\frac12)\bigl[\pm(s^2+h(r))+s\sqrt{s^2+2h(r)}\bigr]s}{h(r)^2\sqrt{s^2+2h(r)}}=0.
\]
Since $\sqrt{s^2+2h(r)}>|s|$, the bracket is non-zero, so we must have $h'(r)+\frac12=0$, i.e. $h(r)=-\frac{r}{2}+\frac{c}{2}$ with $c$ constant. Positivity $h(r)>0$ implies $c>0$ and $|x|^2< c$. Substituting back into \eqref{C13} gives \eqref{C11}. The sufficiency is verified by direct substitution.
\end{proof}

\begin{remark}\label{remark} At this time, \eqref{C11} is equivalent to \eqref{C1} with $u(r-s^2)=\mp\frac{1}{\sqrt{c-r+s^2}}$ and $v(r)=\frac{s\pm\sqrt{c-r+s^2}}{s(c-r)}\pm \int\frac{1}{s^2\sqrt{c-r+s^2}}ds$.
\end{remark}

\begin{proofof}{\ref{maintheorem2}}
It follows immediately from Proposition \ref{prop4.1} and Proposition \ref{prop4.2}.
\end{proofof}

\begin{corollary} \label{example1}
Every projectively flat spherically symmetric spray of zero curvature  can be induced by a Finsler metric $F$.
\end{corollary}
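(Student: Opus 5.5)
The plan is to use Proposition \ref{prop4.2}, which lists all possible projective factors, and to exhibit an explicit Finsler metric for each case.

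Case (i), $p\equiv 0$, is immediate: the Euclidean metric $F=|y|$ has $G^i=0$.

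For case (ii), I would write the projective factor in the original variables. With $r=|x|^2$ and $s=\langle x,y\rangle/|y|$,
\[
P(x,y)=|y|\,p=\frac{\langle x,y\rangle\pm\sqrt{\langle x,y\rangle^2+(c-|x|^2)|y|^2}}{c-|x|^2}.
\]
With the $+$ sign this is exactly the Funk metric $\Theta$ of the ball $B^n_{\sqrt c}(0)$:
\[
\Theta(x,y)=\frac{\sqrt{(c-|x|^2)|y|^2+\langle x,y\rangle^2}+\langle x,y\rangle}{c-|x|^2}.
\]
It is a positive Minkowski-type norm on that ball, and it satisfies the Funk equation $\Theta_{x^k}=\Theta\,\Theta_{y^k}$. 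This can be checked directly, or obtained from the unit-ball case by the dilation $x\mapsto x/\sqrt c$.

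The spray of $\Theta$ itself is $\tfrac12\Theta\,y^i$, and it has curvature $-\tfrac14$, so $\Theta$ is not the metric we want. Instead I will take the Berwald-type metric
\[
F=\frac{\Theta^2}{\sqrt{(c-|x|^2)|y|^2+\langle x,y\rangle^2}}=\frac{\bigl(\sqrt{(c-|x|^2)|y|^2+\langle x,y\rangle^2}+\langle x,y\rangle\bigr)^2}{(c-|x|^2)^2\sqrt{(c-|x|^2)|y|^2+\langle x,y\rangle^2}},
\]
which is the classical Berwald metric rescaled to radius $\sqrt c$. It is known to be projectively flat with zero flag curvature.

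I would verify the following, in order:
\begin{enumerate}[(a)]
\item $F$ is a Finsler metric on $B^n_{\sqrt c}(0)$, i.e.\ $g_{ij}$ is positive definite. This can be done with the standard positivity criterion for $(\alpha,\beta)$-type expressions, or checked at $x=0$ and propagated by the symmetry.
\item $F$ satisfies Hamel's condition $F_{x^ky^l}y^k=F_{x^l}$, so that its spray is $G^i=Py^i$ with $P=F_{x^k}y^k/(2F)$.
\item Using the Funk identities $\Theta_{x^k}y^k=\Theta^2$ and $\Theta_{x^k}=\Theta\Theta_{y^k}$, compute $F_{x^k}y^k=2\Theta F$, so that $P=\Theta$, matching the $+$ sign.
\end{enumerate}

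For the $-$ sign I will use the reversed metric $\tilde F(x,y)=F(x,-y)$. Its spray is $\tilde G^i(x,y)=G^i(x,-y)=-\Theta(x,-y)\,y^i$ by degree-two homogeneity. Since
\[
-\Theta(x,-y)=\frac{\langle x,y\rangle-\sqrt{\langle x,y\rangle^2+(c-|x|^2)|y|^2}}{c-|x|^2},
\]
this matches the $-$ sign in \eqref{C11}. Each of the two sign choices is thus induced by a Finsler metric, which covers case (ii).

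The main obstacle is step (c) together with (b): a clean derivation of $F_{x^k}y^k=2\Theta F$ and of Hamel's condition. I would handle both by writing $F=\Theta^2/\sqrt{\cdots}$. The square-root term can be rewritten as $\sqrt{(c-|x|^2)|y|^2+\langle x,y\rangle^2}=(c-|x|^2)\Theta-\langle x,y\rangle$, which turns every derivative of $F$ into derivatives of $\Theta$ alone. The Funk identities then close the computation.
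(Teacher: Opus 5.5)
Your proposal is correct and is essentially the paper's proof: you reduce to the two cases of Proposition~\ref{prop4.2}, use the Euclidean metric when $p\equiv 0$, and in case (ii) take Berwald's metric rescaled to $B^n_{\sqrt c}(0)$ together with its reverse $F(x,-y)$, which is exactly the metric the paper writes with the $\pm$ sign. The paper only asserts this identification, whereas your steps (b)--(c) actually verify it; one detail needs fixing: positive definiteness cannot be ``checked at $x=0$ and propagated by the symmetry'', since $O(n)$ fixes the origin, so rely instead on the $(\alpha,\beta)$ criterion (here $\phi(s)=(1+s)^2$ and $b=|x|/\sqrt c<1$) or on the fact that $F$ is $\tfrac1c$ times the pullback of the classical Berwald metric under $x\mapsto x/\sqrt c$.
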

\begin{proof}

By Theorem \ref{maintheorem2}, a zero-curvature spray either has $G^i=0$ (induced by a Minkowski metric) or $G^i = |y| \frac{s\pm\sqrt{c-r+s^2}}{c-r} y^i$. In the latter case, the corresponding metric is Berwald's metric (or its reverse) up to scaling:
\[
F(x,y)=\frac{\bigl(\sqrt{(c-|x|^2)|y|^2+\langle x,y\rangle^2}\pm\langle x,y\rangle\bigr)^2}{(c-|x|^2)^2\sqrt{(c-|x|^2)|y|^2+\langle x,y\rangle^2}},
\]
with its projective factor
	\begin{equation*}
			P= \frac{\langle x,y\rangle}{c-\vert x\vert^2} \pm \frac{\sqrt{ (c-\vert x\vert^2)\vert y\vert^2  + \langle x,y\rangle^2}}{c-\vert x\vert^2}.
	\end{equation*}
\end{proof}

Not as the zero curvature case, there exist many spherically symmetric sprays which cannot be induced by any Finsler metric.
\begin{example}\label{example2}
Here are some examples of projectively spherically symmetric sprays with non-zero isotropic curvature.
\begin{enumerate}[\rm (i)]
\item Let $u(r-s^2)=s^2-r+C_1$, $v(r)=r+C_2$, where $C_1,C_2,C_3$ are constants. Then
\[
p(r,s)=\bigl[\int\frac{-r+s^2+C_1}{s^2}ds+r+C_2\bigr]s = s^2+(r+\widetilde{C_2})s+r-C_1,
\]
with $\widetilde{C_2}=C_2+C_3$.
By Berwald's theorem on projectively flat Finsler metrics of constant curvature, if the corresponding  projective flat spray $G$ ($G^i = P y^i = |y|p(r,s) y^i$) is induced by a Finsler metric $F$,
then $F$ must be of constant flag curvature $\lambda$ because $G$ is of isotropic curvature.
Then by Berwald's result (\cite{Be2})
\[ \lambda F^2 = P^2 - P_{x^k} y^k, \]
$F$ must be a spherically symmetric Finsler metric.
However,
this quadratic function of $s$ does not belong to the classification of projectively flat Finsler metrics of constant flag curvature (see \cite{4, 11}).
\item Let $u(r-s^2)=-\frac{1}{2\sqrt{s^2-r+1}}$, $v(r)=\frac{1}{2(1-r)}$. Then
\[
p(r,s)=\frac{\sqrt{s^2-r+1}+s}{2(1-r)}+Cs,
\]
which for $C=0$ gives the projective factor of the Funk metric
\[
F(x,y)=\frac{\sqrt{\langle x,y\rangle^2+|y|^2-|x|^2|y|^2}+\langle x,y\rangle}{1-|x|^2},
\]
of flag curvature $\mathcal{K} =-\frac14$.
\end{enumerate}
\end{example}

\section{Projectively flat spherically symmetric sprays of weakly isotropic curvature}\label{section5}
In this section, we prove Theorem \ref{maintheorem3} and construct  new sprays of weakly isotropic curvature which are not induced by any Finsler metric.

We first recall the definition. A spray $G$ is called of \textit{weakly isotropic curvature} if it is of scalar curvature and there exist a homogeneous function $\Gamma=\Gamma(x,y)$ (positively homogeneous of degree $1$ in $y$) and a $1$-form $\theta=\theta_i(x)y^i$ such that
\begin{align}
 \Gamma_{y^k} R&=\tau_k\Gamma ,\label{A9}\\
\tau_k&=\frac12 R_{y^k}+\frac32\Bigl( \Gamma_{y^k}\theta-\Gamma\theta_k\Bigr)\label{A10}.
\end{align}

For a spherically symmetric spray, $\theta$ can be expressed as $\theta=a(r)\langle x,y\rangle$, where $a(r)$ is a smooth function. Moreover, $\Gamma$ must be $1$-homogeneous, so we write $\Gamma=|y|\gamma(r,s)$.

Now we derive the equivalent conditions for a projectively flat spherically symmetric spray to be of weakly isotropic curvature.
\begin{theorem}\label{maintheorem3}
Let $G$ be a projectively flat spherically symmetric spray with $G^i=|y|p(r,s)y^i$, $r=|x|^2$, $s=\langle x,y\rangle/|y|$. Then $G$ is of weakly isotropic curvature if and only if there exist a smooth function $a(r)$ and a smooth function $\gamma(r,s)$ such that
\begin{equation}\label{A11}
\begin{cases}
2s\gamma p_{rs}+\gamma p_{ss}+(2sp_r-p^2+p_s)\gamma_s-(-pp_s+4p_r)\gamma=0,\\[2mm]
a(r)s\gamma_s-2sp_{rs}-a(r)\gamma+2p_r-p_{ss}=0.
\end{cases}
\end{equation}
\end{theorem}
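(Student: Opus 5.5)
The approach is to start from the curvature formula \eqref{B3}, observe that it is already of scalar-curvature type, and then reduce \eqref{A9}--\eqref{A10} to scalar identities by splitting every covector into its $x_k$- and $y_k$-components, which is legitimate for non-collinear $x,y$.

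\textbf{Step 1: scalar curvature.} Reading off \eqref{B3}, we have $R^i_{\ k}=R\delta^i_k-\tau_k y^i$ with
\[
R=|y|^2(p^2-2sp_r-p_s),\qquad \tau_k=|y|A\,x_k-B\,y_k,
\]
where
\[
A=pp_s+2sp_{rs}-4p_r+p_{ss},\qquad B=(sp+1)p_s+2s(sp_{rs}-p_r)+sp_{ss}-p^2 .
\]
Since $\langle x,y\rangle=s|y|$, we get $\tau_ky^k=|y|^2(sA-B)$. The identity $s\cdot\eqref{C9}-\eqref{C8}=\eqref{C10}$ noted before Proposition \ref{prop4.2} is purely algebraic, so $sA-B=p^2-2sp_r-p_s$, i.e.\ $R=\tau_ky^k$. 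Hence every projectively flat spherically symmetric spray is automatically of scalar curvature, and only \eqref{A9}--\eqref{A10} remain to be checked.

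\textbf{Step 2: data for $\Gamma$ and $\theta$.} As explained before the theorem, we take $\Gamma=|y|\gamma(r,s)$ and $\theta=a(r)\langle x,y\rangle=a\,s|y|$. Using the formula for $\partial s/\partial y^k$, we compute
\[
\Gamma_{y^k}=\gamma_s\,x_k+(\gamma-s\gamma_s)\frac{y_k}{|y|},\qquad \theta_k=a\,x_k .
\]
From the proof of Proposition \ref{prop4.1} we also have $R_{y^k}=|y|(2pp_s-2sp_{rs}-2p_r-p_{ss})x_k+(\cdots)y_k$.

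\textbf{Step 3: reduce \eqref{A9} and \eqref{A10} to their $x_k$-parts.} Every term in both equations is a combination of $x_k$ and $y_k$. Contracting each equation with $y^k$ and using Euler's relation for the degree-one function $\Gamma$ and the degree-two function $R$, together with $\tau_ky^k=R$ and $\Gamma_{y^k}y^k\theta-\Gamma\theta_ky^k=0$, shows that the contracted equation holds identically. Writing an equation as $c_1x_k+c_2y_k=0$, the contraction gives $c_1s|y|+c_2|y|^2=0$, so $c_2$ is determined by $c_1$. Thus each equation is equivalent to the vanishing of its $x_k$-coefficient; this holds on the non-collinear set and then everywhere by continuity.

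For \eqref{A9}, the $x_k$-coefficient gives $\gamma_s(p^2-2sp_r-p_s)=A\gamma$, which after rearranging is exactly the first line of \eqref{A11}.

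For \eqref{A10}, the $x_k$-coefficient, after dividing by $|y|$, reads
\[
A=pp_s-sp_{rs}-p_r-\tfrac12p_{ss}+\tfrac32 a(s\gamma_s-\gamma).
\]
Subtracting, the $pp_s$ terms cancel and we obtain $\tfrac32(2sp_{rs}-2p_r+p_{ss})=\tfrac32a(s\gamma_s-\gamma)$. This is the second line of \eqref{A11}.

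\textbf{Converse and main obstacle.} Every step above is an equivalence, so reversing it gives the converse: given $a$ and $\gamma$ satisfying \eqref{A11}, setting $\Gamma=|y|\gamma$ and $\theta=a\langle x,y\rangle$ yields \eqref{A9}--\eqref{A10}. The main obstacle I expect is the bookkeeping in Step 3. One has to verify carefully that the $y_k$-components are genuinely redundant, which relies on the Euler contraction and on the algebraic identity $sA-B=R/|y|^2$. One also has to track signs in $\Gamma_{y^k}\theta-\Gamma\theta_k$, where the term $a\gamma s\,y_k$ must cancel in the contraction.
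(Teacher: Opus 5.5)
Your proposal is correct and follows the paper's route. You read off $R$ and $\tau_k$ from \eqref{B3}, insert $\Gamma=|y|\gamma(r,s)$ and $\theta=a(r)\langle x,y\rangle$, and reduce \eqref{A9}--\eqref{A10} to their $x_k$-coefficients. Your Euler-contraction argument is a cleaner way of seeing the common factor $sy_k-|y|x_k$ that the paper gets by direct computation. Like the paper, however, you take the spherically symmetric forms of $\Gamma$ and $\theta$ for granted (the paper only asserts them before the theorem), so the necessity direction inherits that unproved reduction.
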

\begin{proof}
Since a projectively flat spray is of scalar curvature, using \eqref{B3}, we have
\begin{align*}
R&=|y|^2(p^2-2sp_r-p_s),\\
\tau_k&=|y|(pp_s+2sp_{rs}-4p_r+p_{ss})x_k-\big[(sp+1)p_s+2s(sp_{rs}-p_r)+sp_{ss}-p^2\big]y_k.
\end{align*}
Set $\Gamma=|y|\gamma(r,s)$, $\theta=a(r)\langle x,y\rangle$. Then
\[
\Gamma_{y^k}=\frac{y_k}{|y|}\gamma+|y|\gamma_s s_{y^k},\qquad
\theta_k=\frac{\partial\theta}{\partial y^k}=a(r)x_k,
\]
and
\[
R_{y^k}= 2 y_k(p^2-2sp_r-p_s)+|y|^2(2pp_s s_{y^k}-2s_{y^k}p_r-2sp_{rs}s_{y^k}-p_{ss}s_{y^k}).
\]
Substituting these into \eqref{A9} and \eqref{A10} and simplifying using the expressions for $s_{y^k}$, we obtain after a lengthy but straightforward computation:
\begin{align}
&|y|(sy_k-|y|x_k)\big[2s\gamma p_{rs}+\gamma p_{ss}+(2sp_r-p^2+p_s)\gamma_s-(-pp_s+4p_r)\gamma\big]=0,\\
&\frac32(sy_k-|y|x_k)\big[a(r)s\gamma_s-2sp_{rs}-a(r)\gamma+2p_r-p_{ss}\big]=0.
\end{align}
Since $sy_k-|y|x_k$ is not identically zero for non-collinear $x,y$, these equations are equivalent to the system \eqref{A11}. The sufficiency is obtained by reversing the steps.
\end{proof}

It is difficult to solve \eqref{A11} in full generality, but we can find some special solutions.
Weakly isotropic curvature is closely related to weakly isotropic flag curvature in Finsler geometry \cite{8}.
It is also natural to ask whether a spray of weakly isotropic curvature can be induced by a Finsler metric.
In \cite{5}, B. Li studied Finsler metrics of weakly isotropic flag curvature and proved that if the dimension of the manifold satisfies \(\dim(M)\ge 3\), then such a metric must be a Randers metric; otherwise, it must be a metric of constant flag curvature.
The following example was first given in \cite{8} (for \(\varepsilon=1\)).
\begin{ex}
Let the projectively flat spherically symmetric spray $G$ be defined by
\[
G^i=Py^i,\quad P=\frac{\mu|y|}{\sqrt{\varepsilon-|x|^2}}+\frac{2\langle x,y\rangle}{\varepsilon-|x|^2},
\]
where $\varepsilon,\mu$ are constants, $\mu>0$. Then $G$ is of weakly isotropic curvature. Indeed, choosing $\Gamma=|y|$ and $\theta=\frac{\mu\langle x,y\rangle}{(\varepsilon-|x|^2)^{3/2}}$ satisfies the conditions. Moreover, this spray is not induced by any Finsler metric (see \cite{8}).
\end{ex}

The following example is new.
\begin{ex}
Let $G$ be defined by
\[
G^i=Py^i,\quad P=\frac{2b\sqrt[4]{1+c|x|^2}}{c}\,\alpha_c,
\]
where
\[
\alpha_c=\frac{\sqrt{|y|^2+c(|x|^2|y|^2-\langle x,y\rangle^2)}}{1+c|x|^2},
\]
and $b,c\neq0$ are constants. Then one can verify that with $\Gamma=\alpha_c$ and $\theta=\frac{b\langle x,y\rangle}{(1+c|x|^2)^{3/4}}$, the conditions \eqref{A9}, \eqref{A10} hold, so $G$ is of weakly isotropic curvature. As in the previous example, this spray is not induced by any Finsler metric.
\end{ex}

\end{document}